\documentclass[11pt,a4paper,reqno]{amsart}
\usepackage{amsmath,amsfonts,amssymb,amsthm}
\usepackage{graphicx}
\usepackage{url}
\usepackage{mathrsfs}
\usepackage[usenames,dvipsnames]{color}

\definecolor{darkred}{rgb}{0.4,0.1,0.1}
\definecolor{darkred2}{rgb}{0.6,0.1,0.1}

\usepackage[colorlinks=true,linkcolor=darkred,citecolor=Blue]{hyperref}
%

\numberwithin{equation}{section}
\theoremstyle{plain}
\newtheorem{thm}{Theorem}[section]
\newtheorem{lem}[thm]{Lemma}

\newtheorem{prop}[thm]{Proposition}
\newtheorem{cor}[thm]{Corollary}

\theoremstyle{definition}
\newtheorem{ex}[thm]{Example}

\theoremstyle{remark}
\newtheorem{remark}[thm]{Remark}
\theoremstyle{plain}

\newcommand{\be}{\begin{equation}}
\newcommand{\ee}{\end{equation}}
\newcommand{\beu}{\begin{equation*}}
\newcommand{\eeu}{\end{equation*}}
\newcommand{\besu}{\begin{equation*}
\begin{aligned}}
\newcommand{\eesu}{\end{aligned}
\end{equation*}}
\newcommand{\bes}{\begin{equation}
\begin{aligned}}
\newcommand{\ees}{\end{aligned}
\end{equation}}

\newcommand\cA{\mathcal A}

\newcommand\cF{\mathcal F}

\newcommand\NN{\mathbb N}

\newcommand\ov{\overline}
\newcommand\wt{\widetilde}

\newcommand{\defeq}{\mathrel{\mathop:}=}

\newcommand\rmO{{\rm O}}
\newcommand\rmo{{\rm o}}
\newcommand\rmd{\mathrm{d}}

\DeclareMathOperator\ran{ran}

\newcommand\void[1]{}

\DeclareMathOperator\Op{Op}
\DeclareMathOperator\Real{Re}

\renewcommand\Re{\Real}



      \def\dC{{\mathbb C}}

      \def\dR{{\mathbb R}}

\def\cA{{\mathcal A}}      
      \def\cF{{\mathcal F}}

\newcommand{\dom}{\mathrm{dom}\,}

\newcounter{counter_a}
\newenvironment{myenum}{\begin{list}{{\rm(\roman{counter_a})}}%
{\usecounter{counter_a}
\setlength{\itemsep}{0.5ex}\setlength{\topsep}{0.7ex}
\setlength{\leftmargin}{5ex}\setlength{\labelwidth}{5ex}}}{\end{list}}

\newcommand\ul{\underline}
\newcommand\ula{\underline{a}}


\title[Spectral asymptotics]{Spectral asymptotics for resolvent differences of
elliptic operators with {\boldmath$\delta$} and {\boldmath$\delta^\prime$}-interactions on hypersurfaces}

\author[J. Behrndt]{Jussi Behrndt}
\address{Technische Universit\"{a}t Graz\\
Institut f\"{u}r Numerische Mathematik\\
Steyrergasse 30\\
8010 Graz\\ Austria}
\email{behrndt@tugraz.at}

\author[G. Grubb]{Gerd Grubb}
\address{Department of Mathematical Sciences\\Copenhagen University \\ Universitets\-parken 5\\ DK-2100\\ Copenhagen\\ Denmark}
\email{grubb@math.ku.dk}

\author[M. Langer]{Matthias Langer}
\address{Department of Mathematics and Statistics\\
University of Strathclyde\\
26 Richmond Street\\ Glasgow G1 1XH\\ United Kingdom}
\email{m.langer@strath.ac.uk}

\author[V. Lotoreichik]{Vladimir Lotoreichik}
\address{Technische Universit\"{a}t Graz\\
Institut f\"{u}r Numerische Mathematik\\
Steyrergasse 30\\
8010 Graz\\ Austria}
\email{lotoreichik@math.tugraz.at}

\begin{document}

\begin{abstract}
We consider self-adjoint realizations of a second-order elliptic
differential expression on $\dR^n$ with singular interactions of $\delta$ and $\delta^\prime$-type
supported on a compact closed smooth hypersurface in $\dR^n$. In our main results we
prove spectral asymptotics formulae with refined remainder estimates for the singular values
of the resolvent difference between the standard self-adjoint realizations and
the
operators with a $\delta$ and $\delta^\prime$-interaction, respectively.
Our technique makes use of general pseudodifferential methods,
classical results on spectral asymptotics of $\psi$do's on closed manifolds
and Krein-type resolvent formulae.
\end{abstract}

\maketitle

\section{Introduction}

In this paper we  study self-adjoint operator realizations
of the formally symmetric, uniformly strongly elliptic differential expression
\[
\big(\cA u\big)(x) \defeq -\sum_{j,k=1}^n
\partial_j\big(a_{jk}(x)\partial_k u\big)(x) +  a(x)u(x),
 \qquad x\in\dR^n,
\]
with singular interactions of $\delta$ and $\delta^\prime$-type supported on
a $C^\infty$-smooth compact hypersurface
$\Sigma\subset\dR^n$, which splits $\dR^n$
into a bounded open set $\Omega_{-}$ and an unbounded open set $\Omega_{+}$.
More precisely, denote by $\cA_\pm$ the restrictions of $\cA$ to $\Omega_\pm$,
let $\gamma^\pm$ and $\nu^\pm$ be the
trace and conormal trace, respectively, on the boundary $\Sigma$ of $\Omega_\pm$,  let
$\alpha,\beta \in C^\infty(\Sigma)$ be real functions with $\beta(x^\prime)\not=0$
for all $x^\prime\in\Sigma$, and consider the elliptic realizations
\begin{equation*}
\begin{split}
  A_{\delta,\alpha}u & = \cA_+ u_+\oplus \cA_-u_-, \\
  \dom A_{\delta,\alpha} & =
  \bigl\{u = u_+\oplus u_-\in H^2(\Omega_+)\oplus H^2(\Omega_-)\colon \\
  &\qquad \gamma^+u_+ = \gamma^-u_-,\;\; \nu^+ u_+ + \nu^- u_- = \alpha\gamma^+ u_+\bigr\},
\end{split}
\end{equation*}
and
\begin{equation*}
\begin{split}
  A_{\delta^\prime,\beta}u & = \cA_+ u_+\oplus \cA_-u_-, \\
  \dom A_{\delta^\prime,\beta} & =\bigl\{u = u_+\oplus u_-\in H^2(\Omega_+)\oplus H^2(\Omega_-)\colon \\
  &\qquad\nu^+ u_+ + \nu^- u_- = 0,\;\; \beta\nu^+ u_+ = \gamma^+ u_+ - \gamma^- u_-\bigr\},
\end{split}
\end{equation*}
which are self-adjoint and bounded from below in $L_2(\dR^n)$; cf.\ Theorem~\ref{th:number1}. Our main goal is to compare the resolvents of
$A_{\delta,\alpha}$ and $A_{\delta^\prime,\beta}$ with the resolvent of the 'free' or 'unperturbed' self-adjoint realization
\[
A_0u = \cA u,\qquad \dom A_0 =  H^2(\dR^n),
\]
and to prove spectral asymptotics formulae with refined
remainder estimates  for the singular values of the corresponding
resolvent differences.
Without loss of generality we may assume that a sufficiently large positive
constant is added to $\cA$ such that  all operators
under consideration have a positive lower bound;
hence we consider
\begin{equation}\label{jussig}
G_{\delta,\alpha} = A_{\delta,\alpha}^{-1} - A_0^{-1}\qquad \text{and}
\qquad G_{\delta',\beta} = A_{\delta',\beta}^{-1} - A_0^{-1}.
\end{equation}
It is known that both operators $G_{\delta,\alpha}$ and $G_{\delta',\beta}$
are compact in $L_2(\dR^n)$, and estimates for the decay of the
singular values $s_k(G_{\delta,\alpha})$ and $s_k(G_{\delta',\beta})$ were recently obtained in
Behrndt et al.\ \cite{BLL13.IEOT} and  \cite{BLL13.Poincare} (for the special case $\cA=-\Delta+a$).
In our main results Theorem~\ref{th:asymp_delta} and
Theorem~\ref{thm:asymp}  we shall prove the more precise asymptotic
results with estimates of the remainder of the form
\begin{equation}\label{jussisk}
\begin{alignedat}{2}
  s_k(G_{\delta,\alpha}) & =
  C_{\delta,\alpha}k^{-\frac{3}{n-1}} + \rmO\bigl(k^{-\frac{4}{n-1}}\bigr),\qquad
  & & k\rightarrow\infty, \\
  s_k(G_{\delta',\beta}) & =
  C_{\delta'}k^{-\frac{2}{n-1}} + \rmO\bigl(k^{-\frac{3}{n-1}}\bigr),\qquad
  & & k\rightarrow\infty,
\end{alignedat}
\end{equation}
with positive constants $C_{\delta,\alpha}$ and $C_{\delta'}$ which are given
explicitly in terms of the coefficients of $\cA$ and $\alpha$; the constant $C_{\delta'}$
is independent of $\beta$.
Note that the singular values of $G_{\delta,\alpha}$ converge faster than the
singular values of $G_{\delta',\beta}$.
We mention that for the first result in \eqref{jussisk} it is assumed that the
function $\alpha$ does not vanish on $\Sigma$; if this assumption is dropped,
the estimate holds with remainder $\rmo (k^{-\frac 3{n-1} })$;
cf.\ Theorem~\ref{th:asymp_delta}.
In the course of our work we also make use of
the direct sum $A_\nu$ of the self-adjoint Neumann operators in $L_2(\Omega_+)$
and $L_2(\Omega_-)$ and we show that the singular values
of $G_{\delta',\beta,\nu} = A_{\delta',\beta}^{-1} - A_\nu^{-1}$ satisfy
\begin{equation}\label{jussisk2}
s_k(G_{\delta',\beta,\nu}) =
C_{\delta',\beta,\nu} k^{-\frac{3}{n-1}} +
\rmO\bigl(k^{-\frac{4}{n-1}}\bigr),\qquad k\rightarrow\infty,
\end{equation}
with the constant $C_{\delta',\beta,\nu} > 0$ explicitly given; cf.\ Theorem~\ref{th:asymp_delta'}.
The proofs of \eqref{jussisk} and \eqref{jussisk2} are mainly based on
pseudodifferential techniques and classical results on spectral asymptotics of $\psi$do's on closed
$C^\infty$-smooth manifolds due to Seeley \cite{S67}, H\"{o}rmander \cite{H68} and Grubb \cite{G84.Duke}.
We also refer to Boutet de Monvel \cite{BdM71}, H\"{o}rmander \cite{H71}, Taylor \cite{T81},
 Rempel and Schulze \cite{RS82} and Grubb \cite{G96, G09}
for general pseudodifferential methods.
A further ingredient in our analysis is a Krein-type resolvent formula,
which provides a factorization of the operators in \eqref{jussig} and is
discussed in detail in Section~\ref{sec:krein}; cf.\ Brasche et al.\
\cite{BEKS94}, Alpay and Behrndt \cite{AB09},
Behrndt et al.\ \cite{BLL13.IEOT,BLL13.Poincare}.

Our results in this paper contribute to a prominent field in the analysis of
partial differential operators: asymptotic estimates for the resolvent difference of
elliptic operators subject to different boundary conditions
were first obtained by Povzner \cite{P53} and Birman \cite{B62}. These estimates were sharpened to
spectral asymptotics formulae by Grubb \cite{G74} for bounded domains and by
Birman and Solomjak \cite{BS79, BS80} for exterior domains,
further generalized by Grubb \cite{G84.Duke, G84.CPDE}, and more recently in
\cite{G11.AA,G11mixed, G14};
see also the review paper \cite{G12}.
For the case of two Robin Laplacians a faster convergence of the singular values
was observed in Behrndt et al.\ \cite{BLLLP10}, and further refined to spectral asymptotics in Grubb \cite{G11.JST}.
We also list the closely related works Deift and Simon
\cite{DS75}, Bardos et al.\ \cite{BGR82}, Gorbachuk and Kutovo\u{\i} \cite{GoK82}, Brasche \cite{B01}, Carron \cite{Ca02}, Malamud \cite{M10}
and Lotoreichik and Rohleder \cite{LR12}
with spectral estimates for resolvent differences and resolvent power differences.

We wish to emphasize that the operators $A_{\delta,\alpha}$ and $A_{\delta^\prime,\beta}$ have
attracted considerable interest in the last two decades from more applied branches
of mathematics and mathematical physics.  In the special case $\cA=-\Delta+a$ we
refer to the review paper \cite{E08} by Exner for an overview on
Schr\"odinger operators with $\delta$-interactions supported on curves and hypersurfaces.
Such Hamiltonians are physically relevant in quantum mechanics, where they
are employed in many-body problems and in the description of various nanostructures,
as well as in the theory of photonic crystals; see, e.g.\
Figotin and Kuchment \cite{FK96}, Popov \cite{P96} and Brummelhuis and Duclos \cite{BD06}.
At the same time there is a  mathematical motivation to study Schr\"odinger operators with
$\delta$-interactions on hypersurfaces because
these operators exhibit non-trivial and interesting
spectral properties; for more details we refer to Brasche et al.\
\cite{BEKS94},
Exner et al.\ \cite{EHL06, EI01, EK03, EP14}, Suslina and Shterenberg
\cite{SS01},
Kondej and Veseli\'c
\cite{KV07}, Kondej and Krej\v{c}i\v{r}\'ik \cite{KK13},
Duch\^{e}ne and Raymond \cite{DR13}
and the references therein.
Schr\"odinger operators with $\delta'$-interactions supported on hypersurfaces are much
less studied than their $\delta$-counterparts.
They have been rigorously defined (in a general setting) only recently in \cite{BLL13.Poincare};
the works Behrndt et al.\ \cite{BEL13} and Exner and Jex \cite{EJ13} on their spectral properties appeared subsequently.
We also mention that for very special geometries such operators were considered
earlier in Antoine et al.\ \cite{AGS87} and Shabani \cite{S88}.

\subsection*{Acknowledgements}

J.~Behrndt and V.~Lotoreichik gratefully acknowledge financial support by the Austrian
Science Fund (FWF), project P 25162-N26. G.~Grubb and M.~Langer
are grateful for the stimulating research stay and the hospitality at the
Graz University of Technology in October 2013 where parts of this paper were written.

\section{The differential operators}
\label{ssec:diffop}

Throughout this paper let $\cA$ be the following second-order
formally symmetric differential expression on $\dR^n$:
\begin{equation}\label{defA}
  \big(\cA u\big)(x) \defeq -\sum_{j,k=1}^n
  \partial_j\big(a_{jk}(x)\partial_k u\big)(x) +  a(x)u(x),
  \qquad x\in\dR^n,
\end{equation}
with real-valued $a_{jk}\in C^\infty(\dR^n)$ satisfying $a_{jk}(x)= a_{kj}(x)$
for all $x\in\dR^n$, $j,k=1,\dots,n$, and a bounded real-valued
coefficient $a\in C^\infty(\dR^n)$.  We assume that $a_{jk}$ and all
their derivatives are bounded and that $\cA$ is uniformly strongly elliptic, i.e.\
\[
  \sum_{j,k=1}^n a_{jk}(x)\xi_j\xi_k \ge C|\xi|^2, \qquad x,\xi\in\dR^n,
\]
for some constant $C>0$.

Further, let $\Sigma\subset\dR^n$ be a $C^\infty$-smooth $(n-1)$-dimensional manifold that separates
the Euclidean space $\dR^n$ into a bounded open set $\Omega_-$ and an
unbounded open set $\Omega_+$.
In the following we denote by $u_+$ and $u_-$ the restrictions of $u\in L_2(\dR^n)$
to $\Omega_+$ and $\Omega_-$, respectively;
the restrictions of the differential expression $\cA$ to $\Omega_\pm$ are denoted by $\cA_\pm$.
For functions $u_\pm\in H^2(\Omega_\pm)$ denote by $\gamma^\pm u_\pm$ the  traces
(boundary values on $\Sigma $)
and by $\nu^\pm u_\pm$ the outward conormal derivatives of $u_\pm$:
\begin{equation}\label{nupm}
  \nu^\pm u_\pm = \sum_{j,k=1}^n a_{jk}\nu_{\pm,j}\gamma^\pm\partial_k u_\pm,
\end{equation}
where $(\nu_{\pm,1}(x),\ldots,\nu_{\pm,n}(x))$ is the exterior unit normal to $\Omega_\pm$
at $x\in\Sigma$.
If $u=u_+\oplus u_-\in H^2(\Omega_+)\oplus H^2(\Omega_-)$ and $\gamma^+u_+=\gamma^-u_-$, then
$u\in H^1(\dR^n)$ and we write $\gamma u$ for $\gamma^+u_+=\gamma^-u_-$.

Let us introduce the following operators:
the free realization of $\cA$ in $L_2(\dR^n)$,
\begin{equation}
\label{A0}
  A_0 u \defeq \cA u,\qquad
  \dom A_0 \defeq H^2(\dR^n),
\end{equation}
the Dirichlet realizations on $\Omega_+$ and $\Omega_-$,
\begin{equation*}
  A_{\pm,\gamma} u_\pm \defeq \cA_\pm u_\pm,\qquad
  \dom A_{\pm,\gamma} \defeq \bigl\{u_\pm \in H^2(\Omega_\pm)\colon \gamma^\pm u_\pm = 0\bigr\},
\end{equation*}
and the Neumann realizations,
\begin{equation*}
  A_{\pm,\nu}u_\pm \defeq \cA_\pm u_\pm,\qquad
  \dom A_{\pm,\nu} \defeq \bigl\{u_\pm \in H^2(\Omega_\pm)\colon \nu^\pm u_\pm = 0\bigr\}.
\end{equation*}
It is well known that the operators $A_0$, $A_{+,\gamma}$, $A_{-,\gamma}$, $A_{+,\nu}$
and $A_{-,\nu}$ are self-adjoint and bounded below.

Let us also introduce direct sums of operators on $\Omega_+$ and $\Omega_-$:
\begin{equation}
\label{ADNsum}
  A_\gamma \defeq A_{+,\gamma} \oplus A_{-,\gamma}, \qquad
  A_\nu \defeq A_{+,\nu} \oplus A_{-,\nu},
\end{equation}
which are self-adjoint operators in $L_2(\dR^n)=L_2(\Omega_+)\oplus L_2(\Omega_-)$.
Note also that the domain of $A_0$ can be written with interface conditions:
\begin{align*}
  \dom A_0 = \bigl\{& u = u_+\oplus u_-\in H^2(\Omega_+)\oplus H^2(\Omega_-)\colon \\
  & \gamma^+u_+ = \gamma^-u_-,\;\; \nu^+u_+ = -\nu^-u_-\bigr\}.
\end{align*}

Moreover, let us fix a real-valued function $\alpha \in C^\infty(\Sigma)$,
and define the $\delta$-operator with strength $\alpha$ by
\begin{equation}
\label{Adelta}
\begin{split}
  A_{\delta,\alpha}u &\defeq \cA_+ u_+\oplus \cA_-u_-, \\
  \dom A_{\delta,\alpha} &\defeq \bigl\{u = u_+\oplus u_-\in H^2(\Omega_+)\oplus H^2(\Omega_-)\colon \\
  &\qquad \gamma^+u_+ = \gamma^-u_-,\;\; \nu^+ u_+ + \nu^- u_- = \alpha\gamma u\bigr\}.
\end{split}
\end{equation}
Let us also fix a real-valued function $\beta\in C^\infty(\Sigma)$ such that $\beta$ is non-zero
on $\Sigma$, and define the $\delta'$-operator with strength $\beta$ by
\begin{equation}
\label{Adeltaprime}
\begin{split}
  A_{\delta^\prime,\beta}u &\defeq \cA_+ u_+\oplus \cA_-u_-, \\
  \dom A_{\delta^\prime,\beta} &\defeq \bigl\{u = u_+\oplus u_-\in H^2(\Omega_+)\oplus H^2(\Omega_-)\colon \\
  &\qquad\nu^+ u_+ + \nu^- u_- = 0,\;\; \beta\nu^+ u_+ = \gamma^+ u_+ - \gamma^- u_-\bigr\}.
\end{split}
\end{equation}

The statements in the next theorem were shown in \cite[Theorem~4.17]{BLL13.IEOT} and
\cite[Theorem~3.11, 3.14, and 3.16]{BLL13.Poincare} for the special case $\cA=-\Delta+a$;
the general case can be shown in a similar way. For the self-adjointness of $A_{\delta,\alpha}$ and $A_{\delta',\beta}$
one can also use the symmetry together
with elliptic regularity theory as done in a related situation in \cite[Theorem~7.3]{G74}.

\begin{thm}\label{th:number1}
The operators $A_{\delta,\alpha}$ and $A_{\delta',\beta}$ are self-adjoint
and bounded below in $L_2(\dR^n)$.
\end{thm}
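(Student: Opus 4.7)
My plan is to establish both self-adjointness and semiboundedness via Kato's first representation theorem for closed semibounded sesquilinear forms. For $A_{\delta,\alpha}$ I would associate the form
\[
\mathfrak{a}_\delta[u,v] = \sum_{j,k=1}^n\int_{\dR^n} a_{jk}\,\partial_k u\,\overline{\partial_j v}\,\rmd x + \int_{\dR^n} a\,u\,\overline{v}\,\rmd x - \int_\Sigma \alpha\,\gamma u\,\overline{\gamma v}\,\rmd\sigma
\]
on the form domain $H^1(\dR^n)$; the sign in front of the boundary integral is the one dictated by Green's formula together with the outward-conormal convention \eqref{nupm}. Uniform strong ellipticity makes the bulk terms coercive after shifting $a$ by a sufficiently large constant, while the trace estimate $\|\gamma u\|_{L_2(\Sigma)}^2\le\eps\|\nabla u\|_{L_2(\dR^n)}^2+C_\eps\|u\|_{L_2(\dR^n)}^2$ together with $\alpha\in L_\infty(\Sigma)$ renders the boundary term infinitesimally form-bounded relative to the bulk. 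Hence $\mathfrak{a}_\delta$ is densely defined, symmetric, closed and bounded below, and the representation theorem produces a self-adjoint operator, semibounded in $L_2(\dR^n)$.

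For $A_{\delta',\beta}$ the analogous form is
\[
\mathfrak{a}_{\delta'}[u,v] = \sum_{\pm}\biggl(\sum_{j,k=1}^n\int_{\Omega_\pm} a_{jk}\,\partial_k u_\pm\,\overline{\partial_j v_\pm}\,\rmd x+\int_{\Omega_\pm} a\,u_\pm\,\overline{v_\pm}\,\rmd x\biggr)-\int_\Sigma \beta^{-1}(\gamma^+u_+-\gamma^-u_-)\overline{(\gamma^+v_+-\gamma^-v_-)}\,\rmd\sigma
\]
on the form domain $H^1(\Omega_+)\oplus H^1(\Omega_-)$, with no continuity requirement across $\Sigma$. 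Since $\beta\in C^\infty(\Sigma)$ is non-vanishing on the compact hypersurface $\Sigma$, $\beta^{-1}$ is bounded, and the trace inequality applied separately on $\Omega_+$ and $\Omega_-$ again renders the boundary contribution infinitesimally form-bounded. Closedness, symmetry and semiboundedness follow as before.

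Identifying the operators associated with $\mathfrak{a}_\delta$ and $\mathfrak{a}_{\delta'}$ with the concrete operators $A_{\delta,\alpha}$ and $A_{\delta',\beta}$ proceeds in two steps. For $u$ in the operator domain, first taking $v\in C_c^\infty(\Omega_\pm)$ shows $\cA_\pm u_\pm\in L_2(\Omega_\pm)$. Then letting the traces of $v$ vary freely in the form domain and matching, via Green's formula applied on each $\Omega_\pm$ separately, the arising boundary integrals with the explicit boundary terms of $\mathfrak{a}_\delta$ and $\mathfrak{a}_{\delta'}$ forces the transmission conditions $\gamma^+u_+=\gamma^-u_-$, $\nu^+u_++\nu^-u_-=\alpha\gamma u$ in the $\delta$-case, and $\nu^+u_++\nu^-u_-=0$, $\beta\nu^+u_+=\gamma^+u_+-\gamma^-u_-$ in the $\delta'$-case.

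The main technical obstacle is the elliptic-regularity bootstrap promoting the $H^1$-information inherited from the form domain to the $H^2(\Omega_\pm)$-regularity required in \eqref{Adelta} and \eqref{Adeltaprime}. Interior $H^2_{\rm loc}(\Omega_\pm)$-regularity is immediate from $\cA_\pm u_\pm\in L_2(\Omega_\pm)$; at $\Sigma$ one solves auxiliary inhomogeneous Dirichlet or Neumann problems on $\Omega_+$ and $\Omega_-$ separately, using one of the derived interface conditions as boundary data, and iterates to raise regularity by half-integer steps, in the spirit of \cite[Theorem~7.3]{G74}. Alternatively, and more in keeping with the pseudodifferential techniques of the later sections of the paper, one may bypass the form route entirely: symmetry on the stated $H^2\oplus H^2$-domains is direct from Green's formula, while surjectivity of $A_{\delta,\alpha}+\lambda$ and $A_{\delta',\beta}+\lambda$ onto $L_2(\dR^n)$ for sufficiently large $\lambda>0$ is furnished by the Krein-type resolvent formula developed in Section~\ref{sec:krein}, which exhibits an explicit $H^2\oplus H^2$-preimage of any $L_2$-datum and thereby yields self-adjointness and semiboundedness in a single step.
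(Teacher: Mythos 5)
Your main argument is sound, and it is essentially the route this paper delegates to: the paper gives no self-contained proof of Theorem~\ref{th:number1}, citing \cite{BLL13.IEOT,BLL13.Poincare} for the case $\cA=-\Delta+a$ (handled there by form and quasi-boundary-triple methods) and pointing to symmetry plus elliptic regularity as in \cite[Theorem~7.3]{G74} for the general case. Your forms carry the signs dictated by the conventions \eqref{nupm}, \eqref{Adelta}, \eqref{Adeltaprime}: Green's identity on $\Omega_\pm$ indeed yields the boundary terms $-\int_\Sigma\alpha\,\gamma u\,\overline{\gamma v}\,\rmd\sigma$ and $-\int_\Sigma\beta^{-1}(\gamma^+u_+-\gamma^-u_-)\overline{(\gamma^+v_+-\gamma^-v_-)}\,\rmd\sigma$; the KLMN step via the trace inequality is standard; and the genuinely nontrivial point is exactly the one you isolate, namely the $H^2(\Omega_+)\oplus H^2(\Omega_-)$ regularity of the operator domain. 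For the $\delta'$ case this is a one-step matter: the interface condition derived from the form identity gives $\nu^+u_+=\beta^{-1}(\gamma^+u_+-\gamma^-u_-)\in H^{1/2}(\Sigma)$, and Neumann elliptic regularity on each side yields $H^2$. For the $\delta$ case a transparent bootstrap writes $u_\pm=K^\pm_\gamma\gamma u+A_{\pm,\gamma}^{-1}f_\pm$ with $f_\pm\defeq\cA_\pm u_\pm\in L_2(\Omega_\pm)$, so the weak condition $\nu^+u_++\nu^-u_-=\alpha\gamma u$ gives $(P^+_{\gamma,\nu}+P^-_{\gamma,\nu}-\alpha)\gamma u\in H^{1/2}(\Sigma)$; ellipticity of this first-order $\psi$do (only its principal symbol $2\kappa_0$ is needed, not any invertibility) lifts $\gamma u$ to $H^{3/2}(\Sigma)$ and hence $u_\pm$ to $H^2(\Omega_\pm)$. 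This is what ``elliptic regularity in the spirit of \cite{G74}'' amounts to, so your sketch is at the paper's own level of detail, with the added merit of treating the variable-coefficient $\cA$ directly rather than only $-\Delta+a$.

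One caveat: your closing alternative, in which surjectivity of $A_{\delta,\alpha}+\lambda$ and $A_{\delta',\beta}+\lambda$ is ``furnished by the Krein-type resolvent formula developed in Section~\ref{sec:krein}'', is circular as stated. All results of that section (Proposition~\ref{prop:unique} and Propositions~\ref{prop:krein1}--\ref{prop:krein3}) are proved under the standing assumption that $A_{\delta,\alpha}$ and $A_{\delta',\beta}$ are already self-adjoint with $0$ in the resolvent set; in particular, the injectivity of $P^+_{\gamma,\nu}+P^-_{\gamma,\nu}-\alpha$ is obtained there by contradiction with the invertibility of $A_{\delta,\alpha}$. To make that route legitimate you would have to prove bijectivity of the relevant boundary $\psi$do's independently (for instance for a sufficiently large spectral shift, using positivity of the Dirichlet-to-Neumann maps), rather than quoting Section~\ref{sec:krein}.
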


Since all these operators are bounded below, we can assume without loss of generality
(by adding a sufficiently large real constant to $a$)
that $A_0$, $A_{\pm,\gamma}$, $A_{\pm,\nu}$, $A_{\delta,\alpha}$ and $A_{\delta',\beta}$
are positive with $0$ in the resolvent set.

We shall often tacitly identify $H^s(\Omega _+)\oplus H^s(\Omega _-)$
with $H^s(\Omega _+)\times H^s(\Omega _-)$ and write the operators in
matrix form.

\section{Pseudodifferential methods}
\label{sec:psido}

In order to show the spectral asymptotics formulae we are aiming for,
we have to go deeper into the definitions of the entering operators by
pseudodifferential techniques. Pseudodifferential operators ($\psi$do's)
$P$ are defined on $\dR^n$ by formulae
\[
  (Pu)(x) = \Op\bigl(p(x,\xi)\bigr)u
  = p(x,D)u
  = \frac{1}{(2\pi)^n}\int_{\dR^n}e^{ix\cdot \xi }p(x,\xi )\hat u(\xi )\,\rmd x,
\]
where $\hat u(\xi )=\cF u=\int e^{-ix\cdot \xi }u(x)\rmd x$ is the
Fourier transform; $p(x,\xi )$ is called the \emph{symbol} of $P$. There are
various conditions on $p$, interpretations to distributions $u$, and
rules of calculus, in particular behaviour under coordinate changes
that allow the definition on manifolds, for which we refer to the vast
literature, e.g.\ \cite{H71,T81,G09}.  The symbols we consider are
``classical'' or ``polyhomogeneous'', meaning that $p(x,\xi )$ is an asymptotic series of
functions $p_{d-j}(x,\xi )$, $j=0,1,2,\dots$, homogeneous of degree
$d-j$ in $\xi $; $p$ (and $P$) is then said to be of order $d$, and
the principal symbol is the first term $p^0=p_{d}$. It has an
invariant meaning in the manifold situation.

The following theorem is an important ingredient in the
proofs in Section~\ref{sec:4}. The first part is essentially due to
Seeley \cite{S67}.  This paper treats the elliptic case;
how the estimate can be extended to the general case is discussed,
e.g.\ in \cite[Lemma~4.5 and following paragraph]{G84.Duke}
with more references given as well.
The second part is due to H\"ormander \cite{H68}. The transition between
his formulation in terms of the counting function for $P^{-1}$ and the
eigenvalue asymptotics for $P$ is accounted for, e.g.\ in
\cite[Lemma~6.2]{G78} and \cite[Lemma~A.5]{G96}.

\begin{thm}\label{thm:grubb}
Let $P$ be a classical pseudodifferential
operator of negative order $-t$ on $\Sigma $,
with principal symbol $p^0(x,\xi)$.
Then the following statements hold.
\begin{myenum}
\item $P$ is a compact operator in $L_2(\Sigma )$, and its
singular values satisfy
\[
  s_k(P) = (c(P))^{\frac{t}{n-1}} k^{-\frac{t}{n-1}}
  + \rmo(k^{-\frac{t}{n-1}}),  \qquad k\to\infty,
\]
where
\[
  c(P) = \frac{1}{(n-1)(2\pi)^{n-1}}\int_{\Sigma}\int_{|\xi|=1}
  |p^0(x,\xi )|^{\frac{n-1}{t}}\rmd\omega(\xi)\rmd\sigma(x);
\]
here $\sigma$ and $\omega$ are the surface measures on the
hypersurfaces $\Sigma$ and $\{\xi\in\dR^{n-1}\colon |\xi|=1\}$,
respectively.
\item If, moreover, $P$ is elliptic and invertible, then the
asymptotic estimate can be sharpened to the form
\[
  s_k(P) =(c(P))^{\frac{t}{n-1}}k^{-\frac{t}{n-1}}
  + \rmO(k^{-\frac{t+1}{n-1}}),\qquad k\rightarrow\infty.
\]
\end{myenum}
\end{thm}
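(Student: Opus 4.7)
The plan is to reduce the singular value asymptotics for $P$ to Weyl-type counting-function asymptotics for a positive self-adjoint $\psi$do of negative order and then invert via a Tauberian argument. Since $s_k(P)=\lambda_k(|P|)$ with $|P|=(P^*P)^{1/2}$, and $P^*P$ is a self-adjoint classical $\psi$do of order $-2t$ on the closed $(n-1)$-dimensional manifold $\Sigma$ with principal symbol $|p^0(x,\xi)|^2$, the task reduces to producing spectral asymptotics for $\lambda_k(P^*P)$ and taking square roots. The standard Tauberian lemmas recorded in \cite[Lemma~6.2]{G78} and \cite[Lemma~A.5]{G96} convert an estimate $N\bigl(\lambda,(P^*P)^{-1}\bigr)=c(P)\lambda^{(n-1)/(2t)}+\rmo\bigl(\lambda^{(n-1)/(2t)}\bigr)$ for the counting function of $(P^*P)^{-1}$ into $s_k(P)=(c(P))^{t/(n-1)} k^{-t/(n-1)}+\rmo\bigl(k^{-t/(n-1)}\bigr)$; a counting-function remainder one order in $\lambda^{1/(2t)}$ sharper translates, through the same lemmas, into a $k$-remainder one factor of $k^{-1/(n-1)}$ sharper.

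For part~(i) in the elliptic case where $p^0$ has no zeros on $T^*\Sigma\setminus 0$, the operator $(P^*P)^{-1}$ (defined modulo a finite-rank correction on $\ker P$) is a positive elliptic self-adjoint $\psi$do of order $2t$ on $\Sigma$, and Seeley's theorem \cite{S67} supplies exactly the counting-function asymptotics above; the constant is identified as
\beu
  \frac{1}{(n-1)(2\pi)^{n-1}}\int_\Sigma\int_{|\xi|=1}|p^0(x,\xi)|^{\frac{n-1}{t}}\,\rmd\omega\,\rmd\sigma
\eeu
by a short polar-coordinate computation using homogeneity of the principal symbol $|p^0|^{-2}$ of $(P^*P)^{-1}$. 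To handle a general, possibly non-elliptic, $P$ I would follow the extension argument outlined in \cite[Lemma~4.5 and the subsequent paragraph]{G84.Duke}: approximate $P$ in the appropriate symbol topology by elliptic classical $\psi$do's of the same order, apply the elliptic result to the approximants, and pass to the limit by invoking Ky~Fan--type inequalities for singular values together with continuity of $c(\cdot)$ in the $L^{(n-1)/t}$-norm of the principal symbol on the cosphere bundle.

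For part~(ii), assuming $P$ elliptic and invertible, the sharpening from $\rmo$ to $\rmO\bigl(k^{-(t+1)/(n-1)}\bigr)$ comes from H\"ormander's sharp Weyl law \cite{H68} for positive elliptic self-adjoint $\psi$do's on closed manifolds, proved by constructing a Fourier integral operator parametrix for the unitary group $\erm^{\irm s\,(P^*P)^{-1/(2t)}}$ and running a Tauberian argument on its trace. Applied to $(P^*P)^{-1}$, this gives a counting-function remainder one order in $\lambda^{1/(2t)}$ better than the leading term, which the same Tauberian conversion turns into the claimed improved singular-value remainder. The main technical obstacle is the non-elliptic half of~(i): because $|p^0|^2$ may vanish on a set of positive measure, $(P^*P)^{-1}$ is not a $\psi$do and Seeley's theorem does not apply directly, so the perturbation scheme must be calibrated so that the perturbation's contribution to the leading constant vanishes in the limit while the $\rmo$ remainder stays uniformly controlled; this is possible precisely because the Seeley constant depends on the principal symbol only through its $L^{(n-1)/t}$-norm, and this norm is continuous under the chosen approximation.
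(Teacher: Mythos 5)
Your proposal is correct and takes essentially the same route as the paper, which does not prove Theorem~\ref{thm:grubb} itself but attributes part (i) to Seeley \cite{S67} with the non-elliptic extension handled as in \cite[Lemma~4.5]{G84.Duke}, and part (ii) to H\"ormander \cite{H68}, with the passage between counting-function asymptotics and eigenvalue/singular-value asymptotics taken from \cite[Lemma~6.2]{G78} and \cite[Lemma~A.5]{G96}. Your reduction via $P^*P$, the Weyl law for the inverse, the Tauberian conversion of remainders, and the elliptic-approximation argument for the non-elliptic case is precisely the chain of arguments behind those citations.
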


\medskip

A general systematic theory covering the boundary value problems we
are considering, as well as much more general
situations, was introduced by Boutet de Monvel \cite{BdM71}: the theory of
pseudodifferential boundary operators ($\psi $dbo's). Besides working
with pseudodifferential operators $P$ on $\dR^n$ and their
versions $P_+$ truncated
to smooth subsets $\Omega$ (in particular to $\dR^n_+$), the theory
includes Poisson operators $K$ (going from $\partial\Omega $ to
$\Omega $), trace operators $T$ (going from $\Omega $ to $\partial\Omega$),
$\psi$do's $S$ on  $\partial\Omega $, and the so-called singular
Green operators $G$, essentially in the form of finite sums
or infinite series: $\sum K_jT_j$.
We shall not use the $\psi$dbo calculus in full generality,
but rather its notation and elementary composition rules.
Details on the $\psi$dbo calculus are found, e.g.\ in
\cite{BdM71,RS82,G84.Duke, G96,G09}.

In this theory, the operators are described by use of local coordinate
systems, carrying the study of the operators over to the situation of
$\Omega =\dR_+^n \defeq \dR^{n-1}\times\dR_+$.
Here a differential operator $P=\sum_{|\alpha |\le m}a_\alpha (x)D^\alpha$ with the symbol
$p(x,\xi )=\sum_{|\alpha |\le m}a_\alpha (x)\xi ^\alpha$
has the principal symbol $p^0(x,\xi )=\sum_{|\alpha |=m}a_\alpha (x)\xi ^\alpha$,
and the model operator at a point $(x',0)\in \dR^{n-1}\times \{0\}$
is $p^0(x',0,\xi',D_n)=\sum_{|\alpha|=m}a_\alpha (x',0){\xi'}^{\alpha'}D_n^{\alpha_n}$.
The solution operator for the Dirichlet problem for our $\cA$ on $\Omega_+$
with non-zero boundary data, zero interior data, is a Poisson operator.
Such operators, carried over to $\dR_+^n$, are generally of the form
\[
  (K\varphi )(x)=\frac{1}{(2\pi)^{n-1}}\int_{\dR^{n-1}}e^{ix'\cdot\xi'}
  \tilde k(x',x_n,\xi')\hat \varphi (\xi')\,\rmd\xi',
  \qquad x=(x',x_n),
\]
where $\tilde k(x',x_n,\xi')$ is called the \emph{symbol-kernel} of $K$;
it is a $C^\infty $-function on $\dR_+^n\times \dR^{n-1}$ that
is rapidly decreasing for $x_n\to\infty $, with
\begin{equation}\label{deforder}
\begin{aligned}
  \sup_{x_n\in(0,\infty)}\bigl|x_n^l\partial_n^{l'}\partial_{x'}^\beta \partial_{\xi'}^\alpha
  \tilde k (x,\xi')\bigr|
  \le C_{l,l',\alpha ,\beta }(1+|\xi'|)^{d-l+l'-|\alpha|}, \hspace*{10ex}& \\
  l,l'\in\NN_0,\,\alpha,\beta\in\NN_0^{n-1},&
\end{aligned}
\end{equation}
for some $d$, and is then said to be of \emph{order} $d$.  More information
on the structure of symbol-kernels is found, e.g.\
in \cite[Section~10.1]{G09}.
The symbol-kernel $\tilde k$ is a series of terms with certain
quasi-homogeneities in $(\xi',x_n)$, corresponding to falling homogeneities in $\xi $
of the terms in the Fourier transform w.r.t.\
$x_n$, the \emph{symbol} $k=\cF_{x_n\to \xi_n}e^+\tilde k$. There is
a principal part, the top order term.

It is also known from the general calculus that the adjoint of a
Poisson operator is a trace operator (of the general form defined in
the $\psi$dbo calculus), and that a Poisson operator composed to the
left with a trace operator gives
a pseudodifferential operator on $\partial\Omega $
(on $\dR^{n-1}$ in local coordinates).

In Lemma~\ref{le.poisson} below we describe the principal symbol-kernel
of the Poisson solution operators and the principal symbols of the
Dirichlet-to-Neumann and Neumann-to-Dirichlet maps corresponding to
$\cA$ from \eqref{defA} both on $\Omega_+$ and on $\Omega_-$.
For this purpose, let us write the principal symbol
$a^0(x,\xi)$ of $\cA$ in local coordinates
$(x',x_n)=(x_1,\ldots,x_{n-1},x_n)\in\dR_+^n$
at the boundary of $\Omega_+$:
\begin{equation}\label{quadratic}
  \ula^0(x',0,\xi) = \sum_{j,k=1}^n \ula_{jk}(x')\xi_j\xi_k
  = \ula_{nn}(x')\xi_n^2 + 2b(x',\xi')\xi_n + c(x',\xi').
\end{equation}

Here $\xi'=(\xi_1,\ldots,\xi_{n-1})$, $\ul{a}_{nn}(x')>0$,
\begin{equation}\label{defbc}
  b(x',\xi') \defeq \sum_{j=1}^{n-1} \ula_{jn}(x')\xi_j, \qquad
  c(x',\xi') \defeq \sum_{j,k=1}^{n-1} \ula_{jk}(x')\xi_j\xi_k
\end{equation}
and $\ula_{nn}c>b^2$
when $\xi'\ne0$ since $\ula^0>0$ for $\xi \ne0$.
The roots of the second-order polynomial in $\xi_n$ on the right-hand side
of \eqref{quadratic} are
\[
  \lambda_{\pm}(x',\xi') = \frac{-b(x',\xi')\pm i\sqrt{\ula_{nn}(x')c(x',\xi')-(b(x',\xi'))^2}}{\ula_{nn}(x')}\,,
\]
lying in the upper, respectively lower, complex half-plane and being
homogeneous of degree $1$ in $\xi'$.  Define
\begin{equation}\label{defkappa}
\begin{aligned}
  \kappa_0 &\defeq \sqrt{\ula_{nn}c-b^2} \quad (> 0 \;\;\text{when}\;\xi'\ne0), \\[0.5ex]
  \kappa_\pm &\defeq \mp i \lambda_\pm
  = \frac{\kappa_0\pm ib}{\ula_{nn}}\,.
\end{aligned}
\end{equation}
Clearly, $\kappa_\pm$ are complex conjugates and have positive real part:
$\Re\kappa_\pm=\kappa_0/\ula_{nn}$, and satisfy $\kappa_+\ov\kappa_+=c/\ula_{nn}$.
With these expressions we can factorize~$\ula^0$:
\begin{align}
  \ula^0(x',0,\xi',\xi_n) &= \ula_{nn}(x')(\kappa_+ +i\xi_n)(\kappa_--i\xi_n),
   \\[0.5ex]
  \ula^0(x',0,\xi',D_n) &= \ula_{nn}(x')(\kappa_++\partial_n)(\kappa_--\partial_n).
  \label{a0fact2}
\end{align}

Let $K_\gamma^\pm:H^{3/2}(\Sigma)\to H^2(\Omega_\pm)$ be the
Poisson solution operators that map a $\varphi\in H^{3/2}(\Omega_\pm)$
onto the solutions $u_\pm\in H^2(\Omega_\pm)$ of the boundary value problems
\begin{equation}\label{bvp_poisson_dir}
  \cA_\pm u_\pm = 0, \quad \gamma^\pm u_\pm = \varphi.
\end{equation}
Similarly, let
$K_\nu^\pm: H^{1/2}(\Sigma)\to H^2(\Omega_\pm)$
be the Poisson solution operators corresponding to the Neumann problems
\begin{equation}\label{bvp_poisson_neu}
  \cA_\pm u_\pm = 0, \quad \nu^\pm u_\pm = \psi.
\end{equation}
Moreover, we define the Dirichlet-to-Neumann and Neumann-to-Dirichlet
operators by
\begin{equation}
\label{DNND}
  P^\pm_{\gamma,\nu} \defeq \nu^\pm K^\pm_\gamma, \qquad
  P^\pm_{\nu,\gamma} \defeq \gamma^\pm K^\pm_\nu.
\end{equation}

In the next lemma we collect properties of these operators, which are
needed in the proofs of our main results.

\begin{lem}\label{le.poisson}
Let the operators $K_\gamma^\pm$, $K_\nu^\pm$, $P_{\gamma,\nu}^\pm$ and
$P_{\nu,\gamma}^\pm$ be as above.  Then the following statements hold.
\begin{myenum}
\item
The operators $K^+_\gamma$ and $K^+_\nu$ are Poisson operators
of orders $0$ and $-1$, respectively.
Their principal symbol-kernels are, in local coordinates,
\begin{align}
  \tilde k^{+0}_\gamma (x',x_n,\xi')
  &= e^{-\kappa_+(x',\xi')x_n},
  \label{tildek+0g} \\
  \tilde k^{+0}_\nu(x',x_n,\xi')
  &= \frac{1}{\kappa_0(x',\xi')}e^{-\kappa_+(x',\xi')x_n}.
  \label{tildek+0n}
\end{align}

\item
The operators $P^+_{\gamma,\nu}$ and $P^+_{\nu,\gamma}$ are
pseudodifferential operators of orders $1$ and $-1$, respectively.
Their principal symbols are
\begin{equation}\label{p+0}
  p^{+0}_{\gamma,\nu}(x',\xi') = \kappa_0(x',\xi') \qquad\text{and}\qquad
  p^{+0}_{\nu,\gamma}(x',\xi') = \frac{1}{\kappa_0(x',\xi')}\,,\!\!
\end{equation}
which are positive.

\item
The compositions $(K_\gamma ^+)^*K_\gamma ^+$ and
$(K_\nu ^+)^*K_\nu ^+$ are pseudodifferential operators
of orders $-1$ and $-3$, respectively.
Their principal symbols are
\[
  \frac{\ula_{nn}(x')}{2\kappa_0(x',\xi')} \qquad\text{and}\qquad
  \frac{\ula_{nn}(x')}{2(\kappa_0(x',\xi'))^3}\,.
\]
\end{myenum}
For $\cA$ on $\Omega_-$ the formulae hold with $\kappa_-$ instead
of $\kappa_+$.
\end{lem}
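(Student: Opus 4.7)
The plan is to work in a local system of boundary coordinates that flattens $\Sigma$ to $\dR^{n-1}\times\{0\}$ with $\Omega_+$ corresponding to $\dR_+^n = \dR^{n-1}\times \dR_+$. In such coordinates the Dirichlet and Neumann problems \eqref{bvp_poisson_dir}--\eqref{bvp_poisson_neu} for $\cA_+$ fit into the Boutet de Monvel calculus, so the solution operators $K_\gamma^+$ and $K_\nu^+$ are Poisson operators of the claimed orders $0$ and $-1$, respectively; only the principal symbol-kernels need to be identified. For this I would pass to the principal-symbol model problem
\[
  \ula^0(x',0,\xi',D_n)\,\tilde u = 0 \quad \text{on }(0,\infty),
\]
for $\tilde u(x_n)$ bounded as $x_n\to\infty$, with prescribed principal-order trace data at $x_n=0$. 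From the factorization \eqref{a0fact2} the general solution of this ODE is spanned by $e^{-\kappa_+ x_n}$ and $e^{\kappa_- x_n}$; since $\Re\kappa_\pm = \kappa_0/\ula_{nn}>0$, only the first one is bounded on $(0,\infty)$. Imposing the Dirichlet condition $\tilde u|_{x_n=0}=1$ immediately gives \eqref{tildek+0g}. To obtain \eqref{tildek+0n} I would compute the principal-symbol conormal trace of $e^{-\kappa_+ x_n}$: in the flattened coordinates the exterior normal to $\Omega_+$ is $-e_n$, so formula \eqref{nupm} together with $\partial_n e^{-\kappa_+ x_n}|_{x_n=0}=-\kappa_+$ and the Fourier correspondence $\partial_{x_j}\leftrightarrow i\xi_j$ for $j<n$ yields
\[
  \nu^+\bigl(e^{-\kappa_+ x_n}\bigr)\bigm|_{x_n=0}
  = \ula_{nn}\kappa_+ - ib(x',\xi') = (\kappa_0+ib)-ib = \kappa_0,
\]
where the last equality uses \eqref{defkappa}. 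Normalizing by $\kappa_0$ then produces \eqref{tildek+0n}.

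Part (ii) is essentially a corollary of part (i). By \eqref{DNND} we have $P_{\gamma,\nu}^+=\nu^+ K_\gamma^+$ and $P_{\nu,\gamma}^+=\gamma^+ K_\nu^+$; the general composition rules of the $\psi$dbo calculus identify these as $\psi$do's on $\Sigma$ of orders $1$ and $-1$, respectively, whose principal symbols are $\nu^+\tilde k^{+0}_\gamma\restr{x_n=0}=\kappa_0$ and $\gamma^+\tilde k^{+0}_\nu=1/\kappa_0$. Both are positive since $\kappa_0>0$ for $\xi'\neq 0$.

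For part (iii) I would invoke the standard composition rule in the Boutet de Monvel calculus according to which, for a Poisson operator $K$ of order $d$ with principal symbol-kernel $\tilde k^0$, the composition $K^*K$ is a $\psi$do on $\Sigma$ of order $2d-1$ with principal symbol
\[
  \int_0^\infty \bigl|\tilde k^0(x',x_n,\xi')\bigr|^2\,\rmd x_n.
\]
Substituting $\tilde k^{+0}_\gamma=e^{-\kappa_+ x_n}$ gives $\int_0^\infty e^{-2\kappa_0 x_n/\ula_{nn}}\,\rmd x_n = \ula_{nn}/(2\kappa_0)$, and substituting $\tilde k^{+0}_\nu=\kappa_0^{-1}e^{-\kappa_+ x_n}$ gives $\ula_{nn}/(2\kappa_0^3)$, as claimed. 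The $\Omega_-$ case is treated identically after the coordinate reflection that brings $\Omega_-$ into the form $\dR_+^n$; this reflection flips the sign of $\xi_n$, and hence (through \eqref{defkappa}) interchanges the roles of $\kappa_+$ and $\kappa_-$, producing the analogous formulae with $\kappa_-$ in place of $\kappa_+$.

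The main points that need care are the sign bookkeeping in the conormal derivative \eqref{nupm} in the flattened coordinates — in particular verifying that the combination $\ula_{nn}\kappa_+-ib$ collapses to $\kappa_0$ via \eqref{defkappa}, and that the ``bounded'' exponential root on $(0,\infty)$ is indeed $-\kappa_+$ rather than $\kappa_-$ — and the correct invocation of the integral formula for the principal symbol of $K^*K$ from the $\psi$dbo calculus. Once these ingredients are set up, the remaining order and composition bookkeeping is routine within the Boutet de Monvel framework.
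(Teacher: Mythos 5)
Your proposal is correct and follows essentially the same route as the paper: solve the one-dimensional model problem using the factorization \eqref{a0fact2} and retain the decaying exponential $e^{-\kappa_+x_n}$, compute the conormal trace $\ula_{nn}\kappa_+-ib=\kappa_0$ to get \eqref{tildek+0n} and the symbols \eqref{p+0}, and obtain the principal symbols of $(K^+_\gamma)^*K^+_\gamma$ and $(K^+_\nu)^*K^+_\nu$ from the $x_n$-integral of $|\tilde k^0|^2$ via the $\psi$dbo composition rules, with the reflection argument exchanging $\kappa_+$ and $\kappa_-$ on $\Omega_-$. This matches the paper's proof step for step, including the order bookkeeping.
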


\begin{proof}
(i)
To find the principal symbol-kernel of $K_\gamma^+$, we have to solve
the following model problem for each $(x',\xi')$ with $\xi '\ne 0$
on the one-dimensional level:
\begin{equation}\label{839}
  \ula^0(x',0,\xi',D_n)u(x_n) = 0 \quad\text{on }\dR_+, \qquad u(0)=\varphi
  \in \dC,
\end{equation}
where $\ula^0$ is the principal symbol of $\cA$ in local coordinates.
It follows from \eqref{a0fact2} and the inequality
$\Re\kappa_+>0$ that
the $L_2(\dR_+)$-solution of \eqref{839} is
\[
  u(x_n)=\varphi e^{-\kappa_+x_n}.
\]
Hence $\tilde k^{+0}_\gamma = e^{-\kappa_+x_n}$ is the principal symbol-kernel
of the Poisson operator $K_\gamma^+$. This shows \eqref{tildek+0g}.
In view of \eqref{deforder}, it has order $0$. 

Let us now consider the Neumann problem.
In local coordinates the conormal derivative takes the form
\[
  \ul\nu^+ u = -\ula_{nn}\partial_{x_n}u\big|_{x_n=0}-\sum_{k=1}^{n-1}\ula_{nk}i\xi_k u(0);
\]
cf.\ \eqref{nupm} and observe that the outward normal is $(0,\ldots,0,-1)$.
Since
\begin{equation}\label{518}
  \ul\nu^+ e^{-\kappa_+x_n} = -\ula_{nn}(-\kappa_+)-\sum_{k=1}^{n-1}\ula_{nk}i\xi_k
  = \ula_{nn}\frac{\kappa_0+ib}{\ula_{nn}}-ib = \kappa_0,
\end{equation}
the solution of $\ula^0(x',0,\xi',D_n)u(x_n)=0$, $\ul\nu^+u=\psi$ is
\[
  u(x_n) = \psi\frac{1}{\kappa_0}e^{-\kappa_+x_n},
\]
which yields \eqref{tildek+0n}.  Moreover, $\tilde k^{+0}_\nu$ has
order $-1$.

(ii)
It follows from \eqref{518} that the Dirichlet-to-Neumann operator
$P_{\gamma,\nu}^+$ has the principal symbol
$p^{+0}_{\gamma,\nu} = \ul\nu^+\tilde k_\gamma^{+0} = \kappa_0$,
which is of order $1$ since $\kappa_0$ is homogeneous
of degree $1$ in $\xi'$.
The principal symbol of $P_{\nu,\gamma}^+$
is $p^{+0}_{\nu,\gamma} = \tilde k_\nu^{+0}|_{x_n=0} = 1/\kappa_0$.

(iii)
The adjoint of $K_\gamma ^+$ has the principal part in local
coordinates acting like
\[
  u(x) \mapsto \frac{1}{(2\pi)^{n-1}}\int_{\dR^{n-1}} e^{ix'\cdot \xi'}\int_0^\infty
  e^{-\ov\kappa_+x_n}\cF_{x'\to \xi'}u(x',x_n)\, \rmd x_n\,\rmd\xi',
\]
with symbol-kernel $\overline{\tilde k_\gamma^{+0}}$ and order $-1$;
for the latter see, e.g.\ \cite[Theorem~10.29 and Remark 10.6]{G09}.
Then the composition of the
principal part of $(K^+_\gamma)^*$ with the
principal part of $K^+_\gamma $ is the $\psi$do on $\dR^{n-1}$ with symbol
\[
  \int_0^\infty e^{-\ov\kappa_+x_n}e^{-\kappa_+x_n}\, \rmd x_n
  = \frac{1}{\ov\kappa_++\kappa_+} = \frac{\ula_{nn}}{2\kappa_0} 
\]
and order $-1$ by \cite[Proposition~10.10\,(v)]{G09}.

For $K^+_\nu $, the principal part of the adjoint acts like
\[
  u(x)\mapsto \frac{1}{(2\pi)^{n-1}}\int_{\dR^{n-1}} e^{ix'\cdot \xi'}\int_0^\infty
  \frac{1}{\kappa_0} e^{-\ov\kappa_+x_n}\cF_{x'\to \xi'}u(x',x_n)\,\rmd x_n\,\rmd\xi',
\]
so we find that $(K^+_\nu)^*K^+_\nu $ has the principal symbol
\[
  \int_0^\infty \frac{1}{\kappa_0}e^{-\ov\kappa_+x_n}\frac{1}{\kappa_0}e^{-\kappa_+x_n}\rmd x_n
  = \frac{1}{\kappa_0^2}\cdot\frac{1}{\ov\kappa_++\kappa_+}
  = \frac{\ula_{nn}}{2\kappa_0^3}\,.
\]

For invariance with respect to coordinate changes
see, e.g.\ \cite[Theorem~8.1]{G09} for $\psi$do's
and \cite[Theorem~2.4.11]{G96} for $\psi$dbo's.

For the same operator $\cA$ considered on $\Omega_-$, the symbol in
local coordinates at a boundary point is as above with the direction
of $x_n$, and hence also of $\xi_n$, reverted. Then $\kappa_+$ and $\kappa_-$
exchange roles.
\end{proof}

The following lemma collects some properties of the operators $K_\gamma^\pm$ and
$K_\nu^\pm$ considered as operators between $L_2$-spaces.

\begin{lem}\label{le.adj}
Let $K_\gamma^\pm$ and $K_\nu^\pm$
be as above.
Then the $L_2$-adjoints of $K_\gamma^\pm$ and $K_\nu^\pm$ satisfy
\begin{equation}\label{adj_K}
  (K_\gamma^\pm)^* = -\nu^\pm A_{\pm,\gamma}^{-1}, \qquad
  (K_\nu^\pm)^* = \gamma^\pm A_{\pm,\nu}^{-1}.
\end{equation}
\end{lem}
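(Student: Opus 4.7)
The plan is to prove both identities by a single application of the Green formula
\[
  (\cA_\pm u_\pm, v_\pm)_{\Omega_\pm} - (u_\pm, \cA_\pm v_\pm)_{\Omega_\pm}
  = (\gamma^\pm u_\pm, \nu^\pm v_\pm)_\Sigma - (\nu^\pm u_\pm, \gamma^\pm v_\pm)_\Sigma,
\]
which is valid for $u_\pm, v_\pm \in H^2(\Omega_\pm)$ with $v_-$ having compact support (or decaying suitably at infinity on $\Omega_+$) and which follows from two integrations by parts using $a_{jk}=a_{kj}$, together with the definition \eqref{nupm} of the conormal derivative with outward normal $\nu^\pm$ on $\Sigma$.

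For the first identity, I would fix $\varphi \in H^{3/2}(\Sigma)$ and $f \in L_2(\Omega_\pm)$ (first assuming $f$ is compactly supported in $\Omega_+$ to avoid growth issues) and set $u_\pm \defeq K_\gamma^\pm \varphi$ and $v_\pm \defeq A_{\pm,\gamma}^{-1} f$. Then $\cA_\pm u_\pm = 0$, $\gamma^\pm u_\pm = \varphi$, $\cA_\pm v_\pm = f$, and $\gamma^\pm v_\pm = 0$, so the Green identity collapses to
\[
  -(K_\gamma^\pm \varphi, f)_{\Omega_\pm}
  = (\varphi, \nu^\pm A_{\pm,\gamma}^{-1} f)_\Sigma,
\]
which by definition of the adjoint yields $(K_\gamma^\pm)^* f = -\nu^\pm A_{\pm,\gamma}^{-1} f$ on the dense subspace of compactly supported $f$, and then on all of $L_2(\Omega_\pm)$ by continuity (the right-hand side being bounded $L_2(\Omega_\pm) \to H^{1/2}(\Sigma) \hookrightarrow L_2(\Sigma)$ thanks to the mapping properties $A_{\pm,\gamma}^{-1}\colon L_2\to H^2$ and $\nu^\pm\colon H^2\to H^{1/2}$). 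The analogous computation with $u_\pm \defeq K_\nu^\pm \psi$ and $v_\pm \defeq A_{\pm,\nu}^{-1} f$ gives $\nu^\pm u_\pm = \psi$ and $\nu^\pm v_\pm = 0$, so the Green identity now reads
\[
  -(K_\nu^\pm \psi, f)_{\Omega_\pm}
  = -(\psi, \gamma^\pm A_{\pm,\nu}^{-1} f)_\Sigma,
\]
producing $(K_\nu^\pm)^* = \gamma^\pm A_{\pm,\nu}^{-1}$.

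The only genuine obstacle is the case of the unbounded domain $\Omega_+$: for arbitrary $f \in L_2(\Omega_+)$ the integration by parts requires controlled decay at infinity. I would handle this by first verifying the identity on the dense subspace of compactly supported $f$ (where Green's formula is classical) and then extending by continuity, using that both $-\nu^\pm A_{\pm,\gamma}^{-1}$ and $\gamma^\pm A_{\pm,\nu}^{-1}$ are bounded from $L_2(\Omega_\pm)$ into $H^{1/2}(\Sigma)$ via the standard elliptic regularity estimates for the Dirichlet and Neumann realizations together with continuity of the trace map. Invariance of all computations under coordinate changes is guaranteed by the $\psi$dbo calculus already invoked in Lemma~\ref{le.poisson}.
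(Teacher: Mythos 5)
Your proof is correct and follows essentially the same route as the paper: apply Green's identity to $u_\pm=K_\gamma^\pm\varphi$ (resp.\ $K_\nu^\pm\psi$) and $v_\pm=A_{\pm,\gamma}^{-1}f$ (resp.\ $A_{\pm,\nu}^{-1}f$), let the homogeneous data kill the extra terms, and conclude via boundedness of $\nu^\pm A_{\pm,\gamma}^{-1}$ and $\gamma^\pm A_{\pm,\nu}^{-1}$. Your extra density step for the exterior domain is harmless but not needed, since both functions already lie in $H^2(\Omega_+)$ and Green's identity holds there directly, which is how the paper argues.
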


\begin{proof}
Let $\varphi\in H^{3/2}(\Sigma)$ and set $u_+\defeq K_\gamma^+\varphi$,
which satisfies \eqref{bvp_poisson_dir}.
Moreover, let $f\in L_2(\Omega_+)$ and set $v_+\defeq A_{+,\gamma}^{-1}f$.
Then Green's identity implies
\begin{align*}
  (f,K_\gamma^+\varphi) &= (\cA_+v_+,u_+)
  = (\cA_+v_+,u_+)-(v_+,\cA_+u_+) \\
  &= (\gamma^+v_+,\nu^+u_+)-(\nu^+v_+,\gamma^+u_+)
  = -(\nu^+A_{+,\gamma}^{-1}f,\varphi)
\end{align*}
which yields the relation for $K_\gamma^+$ in \eqref{adj_K}
since $\nu^+A_{+,\gamma}^{-1}$ is bounded.
The other relations in \eqref{adj_K} are shown in a similar way.
\end{proof}

\begin{ex}\label{ex:laplace1}
In the special, important case
$\cA = -\Delta$ we derive from \eqref{quadratic} and \eqref{defbc} that
\[
\ula_{nn}(x') \equiv 1,\qquad b(x',\xi') \equiv 0,\qquad c(x',\xi') = |\xi'|^2;
\]
note that the principal symbol at the boundary point is unchanged under the
transformation to local coordinates.
Hence, following \eqref{defkappa} we get
\[
\kappa_\pm(x',\xi') = \kappa_0(x',\xi') = |\xi'|.
\]
Thus, according to Lemma~\ref{le.poisson} the principal symbol-kernels
of $K_\gamma^{\pm}$, $K_\nu^{\pm}$  are
\begin{equation*}
\label{Lapsymbols1}
\tilde k_\gamma^{\pm 0} = e^{-|\xi'|x_n}
\qquad\text{and}\qquad
\tilde k_\nu^{\pm 0} = \frac{e^{-|\xi'|x_n}}{|\xi'|};
\end{equation*}
the principal symbols of $P^{\pm}_{\gamma,\nu}$, $P^{\pm}_{\nu,\gamma}$
are
\begin{equation*}
\label{Lapsymbols2}
p^{\pm 0}_{\gamma,\nu} = |\xi'|
\qquad\text{and}\qquad
p^{\pm 0}_{\nu,\gamma} = \frac{1}{|\xi'|};
\end{equation*}
and the principal symbols of $(K_\gamma^\pm)^*K_\gamma^\pm$,
$(K_\nu^\pm)^*K_\nu^\pm$ are
\begin{equation*}
\label{Lapsymbols3}
\frac{1}{2|\xi'|}\qquad\text{and}\qquad \frac{1}{2|\xi'|^3}.
\end{equation*}
\end{ex}

\section{Krein-type formulae}
\label{sec:krein}

In this section we provide Krein-type formulae for differences between inverses
of self-adjoint realizations of $\cA$ defined in Section~\ref{ssec:diffop}.
First we derive Krein-type formulae in a general setting, and then we simplify
these formulae for particular $\delta$ and $\delta'$-couplings.
Similar formulae for systems acting on a single bounded domain can be
found in the paper \cite{G74}.
For coupled problems the reader may also consult \cite[Section~4]{BLL13.IEOT}
and \cite[Section~3]{BLL13.Poincare}.

Let us define the trace mapping
\begin{equation}\label{rho1}
  \varrho \colon H^2(\Omega_+)\oplus H^2(\Omega_-)
  \rightarrow H^{3/2}(\Sigma)\times H^{3/2}(\Sigma)\times H^{1/2}(\Sigma)\times H^{1/2}(\Sigma)
\end{equation}
by
\begin{equation}
\label{rho}
  \varrho(u_+\oplus u_-) \defeq \begin{pmatrix} \gamma^+u_+ \\[0.5ex] \gamma^-u_- \\[0.5ex]
  \nu^+u_+ \\[0.5ex] \nu^- u_- \end{pmatrix},
\end{equation}
where $\gamma^\pm$ are the  traces on $\Sigma $ from the two sides of
it, and $\nu^\pm$ are the outward conormal derivatives; cf.\ Section~\ref{ssec:diffop}.
Note that the map $\varrho$ is surjective by the classical trace theorem; see, e.g.\ \cite{LM68}.

Let in the following $\alpha,\beta\in C^\infty(\Sigma)$
be real-valued with $\beta$ non-vanishing on $\Sigma$.
We consider realizations of $\cA$ in $L_2(\dR^n)$ defined by
\begin{equation}
\label{A1}
\begin{split}
  A_*u &\defeq \cA_+u_+\oplus \cA_-u_-, \\
  \dom A_* &\defeq \bigl\{u\in H^2(\Omega_+)\oplus H^2(\Omega_-)\colon B_*\varrho u =0\bigr\},
\end{split}
\end{equation}
where $B_*$ is one of the following matrices
\begin{equation}
\label{B}
\begin{split}
  & B_0 = \begin{pmatrix} 1&-1&0&0 \\ 0 &0&1&1 \end{pmatrix}, \;
  B_\gamma = \begin{pmatrix} 1&0&0&0 \\ 0&1&0&0 \end{pmatrix}, \;
  B_\nu = \begin{pmatrix} 0&0&1&0 \\ 0&0&0&1 \end{pmatrix}, \\[1ex]
  & B_{\delta,\alpha} = \begin{pmatrix} 1&-1&0&0 \\ -\alpha &0&1&1 \end{pmatrix}, \;
  B_{\delta',\beta} = \begin{pmatrix} 1&-1&-\beta &0 \\ 0&0&1&1 \end{pmatrix}.
\end{split}
\end{equation}
The self-adjoint operators $A_0$, $A_\gamma$, $A_\nu$, $A_{\delta,\alpha}$
and $A_{\delta',\beta}$ defined in \eqref{A0}, \eqref{ADNsum}, \eqref{Adelta}
and \eqref{Adeltaprime} correspond, respectively, to $B_0$, $B_\gamma$,
$B_\nu$, $B_{\delta,\alpha}$ and $B_{\delta',\beta}$, in the sense of \eqref{A1}.
The boundary conditions that are induced by the matrices $B_0$, $B_\gamma$,
$B_\nu$, $B_{\delta,\alpha}$ and $B_{\delta',\beta}$
are analogues of what are called \emph{normal boundary conditions} in \cite{G74}.
A typical property of such boundary conditions is given in the lemma below, which
follows from the fact that $\varrho$ defined in \eqref{rho1}, \eqref{rho} is surjective.


\begin{lem}
\label{lem:surj}
With $\varrho$ defined by \eqref{rho1} and \eqref{rho},
let $B_*$ be one of the matrices in \eqref{B} and let us define
\begin{equation}
\label{R1}
  R_* \defeq H^{s}(\Sigma)\times H^{t}(\Sigma)
\end{equation}
with
\begin{alignat*}{2}
  & s=3/2,\; t=1/2 \quad & &\text{if}\;\; B_* = B_0,\;
  B_{\delta,\alpha},\text{ or } B_{\delta',\beta},\\
  & s=t=3/2 & &\text{if}\;\; B_* = B_\gamma, \\
  & s=t=1/2 & &\text{if}\;\; B_* = B_\nu.
\end{alignat*}
Then $B_*\varrho$ is surjective from $H^2(\Omega_+)\oplus H^2(\Omega_-)$ onto $R_*$.
\end{lem}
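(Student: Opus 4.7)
The plan is to factor the proof through the surjectivity of $\varrho$: since we know by the classical trace theorem (Lions--Magenes) that the pair $(\gamma^\pm,\nu^\pm)\colon H^2(\Omega_\pm)\to H^{3/2}(\Sigma)\times H^{1/2}(\Sigma)$ is surjective, the combined map $\varrho$ is surjective from $H^2(\Omega_+)\oplus H^2(\Omega_-)$ onto
\[
 \wt R \defeq H^{3/2}(\Sigma)\times H^{3/2}(\Sigma)\times H^{1/2}(\Sigma)\times H^{1/2}(\Sigma).
\]
Consequently it suffices, in each of the five cases, to exhibit for an arbitrary element of $R_*$ a preimage in $\wt R$ under the matrix $B_*$.

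For each of the five matrices I would construct such a preimage explicitly. For $B_\gamma$ and $B_\nu$ this is trivial, as they are canonical projections onto two of the four factors of $\wt R$. For $B_0$, given $(f,g)\in H^{3/2}(\Sigma)\times H^{1/2}(\Sigma)$, the quadruple $(\varphi_1,\varphi_2,\psi_1,\psi_2)=(f,0,g,0)$ does the job. For $B_{\delta,\alpha}$, the same choice of $\varphi_1,\varphi_2,\psi_2$ forces $\psi_1=g+\alpha f$, which lies in $H^{1/2}(\Sigma)$ since $\alpha\in C^\infty(\Sigma)$ is a bounded multiplier on $H^s(\Sigma)$. For $B_{\delta',\beta}$, given $(f,g)\in H^{3/2}(\Sigma)\times H^{1/2}(\Sigma)$, the choice $(\varphi_1,\varphi_2,\psi_1,\psi_2)=(f,0,0,g)$ works directly because the $-\beta\psi_1$ contribution vanishes.

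The one place where a small moment's thought is required is the $B_{\delta',\beta}$ case: on a generic quadruple in $\wt R$, the first row $\varphi_1-\varphi_2-\beta\psi_1$ only has regularity $H^{1/2}(\Sigma)$ (not $H^{3/2}$), because $\beta\psi_1$ sits in $H^{1/2}$. Thus $B_{\delta',\beta}$ is not a map into $R_*=H^{3/2}\times H^{1/2}$; nevertheless its image contains $R_*$, which is exactly what surjectivity onto $R_*$ asserts. This subtle point is resolved precisely by choosing $\psi_1=0$ in the preimage, as above.

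The overall structure of the argument is therefore: (1) invoke the trace theorem for $\varrho$, (2) perform the five explicit preimage constructions, each amounting to setting three of the four components to zero or to a fixed function, and (3) compose to obtain surjectivity of $B_*\varrho$ onto $R_*$. No substantial obstacle arises; the only point requiring care is the observation above about $B_{\delta',\beta}$.
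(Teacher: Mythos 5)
Your proposal is correct and follows essentially the same route as the paper, which simply observes that the lemma follows from the surjectivity of $\varrho$ (the classical trace theorem) and leaves the elementary preimage constructions for the matrices $B_*$ implicit; you have merely spelled these out. Your extra remark that $B_{\delta',\beta}$ does not map all of $H^{3/2}\times H^{3/2}\times H^{1/2}\times H^{1/2}$ into $R_*$, so that surjectivity onto $R_*$ must be read as the range containing $R_*$ (which is exactly what is used later), is accurate and handled correctly by your choice $\psi_1=0$.
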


As discussed at the end of Section~\ref{ssec:diffop} we can assume without loss of generality
that the operators $A_0$, $A_\gamma$, $A_\nu$, $A_{\delta,\alpha}$
and $A_{\delta',\beta}$ are positive with $0$ in the resolvent set.
Then the semi-homogeneous Dirichlet and Neumann boundary value problems \eqref{bvp_poisson_dir}
and \eqref{bvp_poisson_neu} for the differential expression $\cA_\pm$ on $\Omega_\pm$
are uniquely solvable in $H^2(\Omega_\pm)$,
and $K_\gamma^\pm$ and $K_\nu^\pm$ are the corresponding solution operators.
It follows that for all
$\varphi=\binom{\varphi_+}{\varphi_-}\in H^{3/2}(\Sigma) \times H^{3/2}(\Sigma)$
the problem
\[
  \cA_+ u_+\oplus\cA_-u_- = 0, \qquad B_\gamma\varrho u
  = \varphi,
\]
with $B_\gamma$ as in \eqref{B},
has a unique solution $u=u_+\oplus u_-\in H^2(\Omega_+)\oplus H^2(\Omega_-)$.
The corresponding solution operator is
\begin{equation}
\label{Kgamma}
  K_\gamma = \begin{pmatrix} K_\gamma^+ & 0 \\ 0& K_\gamma^- \end{pmatrix},
\end{equation}
i.e.\ $u=K_\gamma\varphi$.
Note that for $u=u_+\oplus u_-\in H^2(\Omega_+)\oplus H^2(\Omega_-)$ one has
\begin{equation}\label{fixed_point}
  \cA_+u_+\oplus\cA_-u_-=0 \quad\Rightarrow\quad
  u = K_\gamma B_\gamma\varrho u.
\end{equation}
Similarly, for all $\psi=\binom{\psi_+}{\psi_-}\in H^{1/2}(\Sigma) \times H^{1/2}(\Sigma)$
the problem
\[
  \cA_+ u_+\oplus\cA_-u_- = 0, \qquad B_\nu\varrho u = \psi,
\]
with $B_\nu$ as in \eqref{B},
has a unique solution $u=u_+\oplus u_-\in H^2(\Omega_+)\oplus H^2(\Omega_-)$;
the solution operator is given by
\begin{equation}
\label{Knu}
  K_\nu = \begin{pmatrix} K_\nu^+ & 0 \\ 0& K_\nu^- \end{pmatrix}.
\end{equation}

In the next proposition we investigate the solvability of the
boundary value problems associated with the matrices $B_*$ in \eqref{B}
and derive related properties of the $2\times 2$ matrix $\psi$do's
\begin{equation}
\label{PhiPsi}
  \Phi_* \defeq B_*\varrho K_\gamma\quad\text{and}\quad\Psi_* \defeq B_*\varrho K_\nu.
\end{equation}


\begin{prop}
\label{prop:unique}
Let $B_*$ be one of the matrices in \eqref{B} with the associated
space $R_*$ as in \eqref{R1},
and let the matrix $\psi$do's $\Phi_*$ and $\Psi_*$ be defined by \eqref{PhiPsi}.
Then the following statements hold.
\begin{myenum}
\item For all $\psi\in R_*$
the boundary value problem
\begin{equation}
\label{bvp}
  \cA_+ u_+\oplus\cA_-u_- = 0, \qquad B_*\varrho u = \psi,
\end{equation}
has a unique solution  $u=u_+\oplus u_-\in H^2(\Omega_+)\oplus H^2(\Omega_-)$.
\item
The matrix $\psi$do $\Phi_*$ is bijective from
$H^{3/2}(\Sigma)\times H^{3/2}(\Sigma)$ onto $R_*$.
\item
The matrix $\psi$do $\Psi_*$ is bijective from
$H^{1/2}(\Sigma)\times H^{1/2}(\Sigma)$ onto $R_*$.
\end{myenum}
\end{prop}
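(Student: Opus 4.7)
The plan is to deduce parts (ii) and (iii) from part (i), and to prove (i) by reducing the boundary value problem \eqref{bvp} to the invertibility of the self-adjoint operator $A_*$, which has $0$ in its resolvent set.

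First I would establish the more general assertion that for every $(f,\psi) \in L_2(\dR^n) \times R_*$ the inhomogeneous problem $\cA_+ u_+ \oplus \cA_- u_- = f$, $B_*\varrho u = \psi$ admits a unique solution in $H^2(\Omega_+) \oplus H^2(\Omega_-)$. For existence, Lemma~\ref{lem:surj} supplies a lift $u_0 \in H^2(\Omega_+) \oplus H^2(\Omega_-)$ with $B_*\varrho u_0 = \psi$; since $0 \in \rho(A_*)$, the element $v := A_*^{-1}(f - \cA u_0) \in \dom A_*$ is well defined, and $u := v + u_0$ solves the problem. For uniqueness, the difference of any two solutions lies in $\ker A_* = \{0\}$. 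Setting $f = 0$ yields part (i).

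Part (ii) now follows from the fixed-point identity \eqref{fixed_point}: any $u \in H^2(\Omega_+) \oplus H^2(\Omega_-)$ with $\cA u = 0$ can be recovered as $u = K_\gamma \varphi$ with $\varphi = B_\gamma \varrho u \in H^{3/2}(\Sigma) \times H^{3/2}(\Sigma)$, so that $B_*\varrho u = \Phi_* \varphi$. Surjectivity of $\Phi_*$ onto $R_*$: given $\psi \in R_*$, take the unique $u$ from (i) and set $\varphi := B_\gamma \varrho u$; then $\Phi_*\varphi = \psi$. Injectivity: if $\Phi_*\varphi = 0$, then $u := K_\gamma \varphi$ satisfies $\cA u = 0$ and $B_*\varrho u = 0$, so by (i) with $\psi = 0$ we obtain $u = 0$, whence $\varphi = 0$. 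Part (iii) is handled by exactly the same scheme with $K_\nu$, $B_\nu \varrho$ and $\Psi_*$ in place of $K_\gamma$, $B_\gamma \varrho$ and $\Phi_*$, once one notes the Neumann analogue of \eqref{fixed_point}, namely $u = K_\nu B_\nu \varrho u$ for any $u$ with $\cA u = 0$; this is merely a restatement of the unique solvability of the Neumann problem \eqref{bvp_poisson_neu}, which is available because $A_\nu$ is invertible.

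I foresee no serious obstacle. The only point calling for attention is the correct Sobolev bookkeeping: one must verify in each of the five cases of $B_*$ that the lift $u_0$ indeed lies in $H^2(\Omega_+) \oplus H^2(\Omega_-)$ with $B_*\varrho u_0 \in R_*$, and that the images of $\Phi_*$ and $\Psi_*$ fall in the same space $R_*$. Both requirements are encoded precisely in Lemma~\ref{lem:surj} together with the definition of $R_*$ in \eqref{R1}, so the argument is essentially a formal consequence of the self-adjointness of $A_*$, $A_\gamma$, $A_\nu$ and the surjectivity of the trace map.
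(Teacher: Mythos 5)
Your proposal is correct and follows essentially the same route as the paper: part (i) via a lift from Lemma~\ref{lem:surj} corrected by $A_*^{-1}$ (using that $0$ is in the resolvent set of $A_*$), and parts (ii) and (iii) by combining (i) with the identity \eqref{fixed_point} (respectively its Neumann analogue) for surjectivity and the triviality of $\ker A_*$ for injectivity. The minor generalization to inhomogeneous interior data $f$ and the explicit appeal to \eqref{fixed_point} are only cosmetic differences from the paper's argument.
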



\begin{proof}
In the following, $A_*$ is the self-adjoint operator in $L_2(\dR^n)$
corresponding to the matrix $B_*$ in the sense of \eqref{A1}.
By our assumptions $A_*$ is strictly positive. This implies that the
semi-homogeneous boundary value problem
\begin{equation}
\label{bvp2}
  \cA_+ w_+\oplus\cA_-w_- = f,\qquad B_*\varrho w = 0,
\end{equation}
is uniquely solvable for all $f\in L_2(\dR^n)$, and the unique solution is
given by $w \defeq A_*^{-1}f\in H^2(\Omega_+)\oplus H^2(\Omega_-)$.

(i) Let $\psi\in R_*$ and choose $v \in H^2(\Omega_+)\oplus H^2(\Omega_-)$
such that $B_*\varrho v = \psi$, which is possible by Lemma~\ref{lem:surj}.
The boundary value problem
\begin{equation*}
  \cA_+ z_+\oplus\cA_- z_- = -\cA_+  v_+\oplus\cA_-v_-,
\qquad B_*\varrho z = 0,
\end{equation*}
has a unique solution $z\in H^2(\Omega_+)\oplus H^2(\Omega_-)$; cf.\ \eqref{bvp2}.
It follows that
\[
  u \defeq z +v \in H^2(\Omega_+)\oplus H^2(\Omega_-)
\]
is a solution of \eqref{bvp}. Moreover, this solution is unique. In fact,
suppose that $\tilde u\in H^2(\Omega_+)\oplus H^2(\Omega_-)$ is also a solution
of \eqref{bvp}.  Then $u - \widetilde u \in \dom A_*$ and $A_*(u - \widetilde u) = 0$.
As $A_*$ is strictly positive we conclude that $u=\widetilde u$.


(ii)
First we verify that $\Phi_*$ is injective.
Suppose that $\Phi_*\varphi = B_*\varrho K_\gamma\varphi = 0$ holds
for some $\varphi \in H^{3/2}(\Sigma)\times H^{3/2}(\Sigma)$, $\varphi\not=0$.
Then the function $K_\gamma\varphi\not=0$ belongs to $\dom A_*$ and
satisfies $A_* K_\gamma\varphi = 0$, a contradiction to the strict positivity of $A_*$.
In order to show that $\Phi_*$ is surjective, let $\psi \in R_*$.
By item (i) the boundary value problem
\[
  \cA_+ u_+\oplus\cA_- u_- = 0,\qquad B_*\varrho u = \psi,
\]
has a unique solution $u \in H^2(\Omega_+)\oplus H^2(\Omega_-)$.
Define
\[
  \varphi \defeq B_\gamma \varrho u \in H^{3/2}(\Sigma)\times H^{3/2}(\Sigma).
\]
Note that by item (i) the boundary value problem
\[
  \cA_+ v_+\oplus\cA_-v_- = 0,\qquad B_\gamma\varrho v = \varphi,
\]
has a unique solution in $H^2(\Omega_+)\oplus H^2(\Omega_-)$.
Observe that $K_\gamma\varphi$ and $u$ both are solutions of the above problem.
Hence $u = K_\gamma\varphi$ by the uniqueness.
From
\[
  \Phi_*\varphi = B_*\varrho K_\gamma\varphi = B_*\varrho u = \psi
\]
we conclude that $\psi\in\ran\Phi_*$. It follows that $\Phi_*$ is surjective onto $R_*$.


(iii) The proof of this item is analogous to the proof of (ii).
\end{proof}


We mention that the matrix $\psi$do's $\Phi_*$ and $\Psi_*$
in \eqref{PhiPsi} are elliptic.
This is essentially a consequence of the bijectivity shown in
Proposition~\ref{prop:unique} above.

In the next theorem we give Krein-type factorizations for differences
of inverses 
between either $A_\gamma$ or $A_\nu$ and one of the operators $A_0$,
$A_\nu$, $A_{\delta,\alpha}$ and $A_{\delta',\beta}$.


\begin{thm}
\label{thm:krein1}
Let $B_*$ be one of the matrices in \eqref{B}, let $A_*$,
$A_\gamma$ and $A_\nu$ be the self-adjoint
operators in $L_2(\dR^n)$ corresponding to the matrices $B_*$, $B_\gamma$ and $B_\nu$
as in \eqref{A1}, and let $\varrho$ be the trace map in \eqref{rho1}, \eqref{rho}.
Then the following
statements are true.
\begin{myenum}
\item  The formula
\[
  A_*^{-1} - A_\gamma^{-1} = - K_\gamma\Phi_*^{-1}B_*\varrho A_\gamma^{-1}
\]
holds, where $K_\gamma$ and $\Phi_*$ are as in \eqref{Kgamma} and \eqref{PhiPsi}, respectively.
\item  The formula
\[
  A_*^{-1}-A_\nu^{-1} = -K_\nu\Psi_*^{-1}B_*\varrho A_\nu^{-1}
\]
holds, where $K_\nu$ and $\Psi_*$ are as in \eqref{Knu} and \eqref{PhiPsi}, respectively.
\end{myenum}
\end{thm}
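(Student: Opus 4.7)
The plan is to apply both identities to an arbitrary $f\in L_2(\dR^n)$ and trace through a single computation in which the resolvent difference is identified with the solution of a homogeneous boundary value problem, and then expressed via the Poisson operator $K_\gamma$ (resp.\ $K_\nu$) together with $\Phi_*^{-1}$ (resp.\ $\Psi_*^{-1}$).

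For (i), I would set $u \defeq A_*^{-1}f$, $v \defeq A_\gamma^{-1}f$ and $w \defeq u-v$. Since $\cA_+u_+\oplus\cA_-u_- = f = \cA_+v_+\oplus\cA_-v_-$, the difference satisfies $\cA_+w_+\oplus\cA_-w_- = 0$, so the fixed-point identity \eqref{fixed_point} gives $w = K_\gamma B_\gamma\varrho w$. On the other hand, applying $B_*\varrho$ to $w = u-v$ and using $B_*\varrho u = 0$ (since $u\in\dom A_*$) yields $B_*\varrho w = -B_*\varrho A_\gamma^{-1}f$. Substituting the first expression for $w$ into this identity gives
\[
  -B_*\varrho A_\gamma^{-1}f \;=\; B_*\varrho K_\gamma B_\gamma\varrho w \;=\; \Phi_*\bigl(B_\gamma\varrho w\bigr).
\]
By Proposition~\ref{prop:unique}(ii), $\Phi_*$ is invertible from $H^{3/2}(\Sigma)\times H^{3/2}(\Sigma)$ onto $R_*$, hence $B_\gamma\varrho w = -\Phi_*^{-1}B_*\varrho A_\gamma^{-1}f$, and consequently
\[
  A_*^{-1}f - A_\gamma^{-1}f \;=\; w \;=\; K_\gamma B_\gamma\varrho w \;=\; -K_\gamma\Phi_*^{-1}B_*\varrho A_\gamma^{-1}f,
\]
which is (i), since $f$ was arbitrary.

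Part (ii) would follow from the same template with $(A_\gamma,K_\gamma,B_\gamma,\Phi_*)$ replaced throughout by $(A_\nu,K_\nu,B_\nu,\Psi_*)$. The only additional input is the Neumann analogue of \eqref{fixed_point}: for $u\in H^2(\Omega_+)\oplus H^2(\Omega_-)$ with $\cA_+u_+\oplus\cA_-u_- = 0$ one has $u = K_\nu B_\nu\varrho u$, which is immediate from the unique solvability of the Neumann problem on each $\Omega_\pm$ and the block-diagonal form \eqref{Knu}. Invertibility of $\Psi_*$ is supplied by Proposition~\ref{prop:unique}(iii).

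I do not expect a genuine obstacle: the substantive content has already been absorbed into Proposition~\ref{prop:unique}, and what remains is a short algebraic manipulation on boundary data. The only point requiring care is the bookkeeping of Sobolev exponents: starting from $f\in L_2(\dR^n)$, elliptic regularity places $A_\gamma^{-1}f$ (respectively $A_\nu^{-1}f$) in $H^2(\Omega_+)\oplus H^2(\Omega_-)$, so that $B_*\varrho A_\gamma^{-1}f$ (respectively $B_*\varrho A_\nu^{-1}f$) lies in $R_*$, which is the domain on which $\Phi_*^{-1}$ (respectively $\Psi_*^{-1}$) is well defined by Proposition~\ref{prop:unique}.
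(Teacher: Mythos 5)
Your proposal is correct and follows essentially the same route as the paper: decompose $A_*^{-1}f - A_\gamma^{-1}f$ (resp.\ with $A_\nu$) as a solution of the homogeneous problem, apply the fixed-point identity for $K_\gamma$ (resp.\ its Neumann analogue for $K_\nu$), and use the bijectivity of $\Phi_*$ (resp.\ $\Psi_*$) from Proposition~\ref{prop:unique} to solve for the boundary data. The only cosmetic difference is that you invert $\Phi_*$ after applying $B_*\varrho$ to the fixed-point identity, whereas the paper inserts $K_\gamma\Phi_*^{-1}B_*\varrho$ directly into the chain of equalities; the content is identical.
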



\begin{proof}
(i)
Recall that, by our assumptions, $A_*$ and $A_\gamma$ are strictly positive.
Let $f\in L_2(\dR^n)$ and note that $u \defeq A_*^{-1}f$
is the unique solution of the semi-homogeneous boundary value problem
\[
  \cA_+ u_+\oplus \cA_-u_- = f,\qquad B_*\varrho u = 0,
\]
in $H^2(\Omega_+)\oplus H^2(\Omega_-)$.
Define the functions
\begin{equation}
\label{v}
  v \defeq A_\gamma^{-1}f \qquad\text{and}\qquad z \defeq u-v.
\end{equation}
The latter satisfies
\begin{equation}\label{bvpjussi}
  \cA_+ z_+\oplus\cA_-z_- =0,\qquad B_*\varrho z = - B_*\varrho v,
\end{equation}
which, by \eqref{fixed_point}, implies that
\begin{equation}
\label{z}
  z = K_\gamma B_\gamma\varrho z.
\end{equation}
Now it follows from \eqref{z}, \eqref{PhiPsi}, \eqref{bvpjussi} and \eqref{v}
that
\begin{align*}
  z &= K_\gamma B_\gamma\varrho z
  = K_\gamma\Phi_*^{-1}B_*\varrho K_\gamma B_\gamma\varrho z
  = K_\gamma\Phi_*^{-1}B_*\varrho z \\
  &= -K_\gamma\Phi_*^{-1}B_*\varrho v
  = -K_\gamma\Phi_*^{-1}B_*\varrho A_\gamma^{-1}f.
\end{align*}
This, together with $A_*^{-1}f=u=v+z$ yields the formula in (i).

Item (ii) can be proved in the same way as item (i) when $A_\gamma$, $B_\gamma$, $K_\gamma$
and $\Phi_*$ are replaced by $A_\nu$, $B_\nu$, $K_\nu$ and $\Psi_*$, respectively.
\end{proof}


In the next proposition we simplify the formula from Theorem~\ref{thm:krein1}\,(i)
for the difference 
between the inverses of the self-adjoint operators $A_{\delta,\alpha}$ and $A_\gamma$.
In the formulation and the proof of this proposition we employ the column operator
\begin{equation}
\label{wtK}
  \wt K_\gamma \defeq \begin{pmatrix} K^+_\gamma \\[0.5ex] K^-_\gamma\end{pmatrix}:
  H^{3/2}(\Sigma)\to H^2(\Omega_+)\times H^2(\Omega_-).
\end{equation}

\begin{prop}
\label{prop:krein1}
Let $A_{\delta,\alpha}$ and $A_\gamma$ be as above, and
let $P^\pm_{\gamma,\nu}$ be the Dirichlet-to-Neumann maps in \eqref{DNND}
with principal symbols $p_{\gamma,\nu}^{\pm0}$ in \eqref{p+0}.
Then the following statements hold.

\begin{myenum}
\item
The $\psi$do $P^+_{\gamma,\nu} + P^-_{\gamma,\nu} -\alpha$ is elliptic of order $1$
with principal symbol $p_{\gamma,\nu}^{+0} + p_{\gamma,\nu}^{-0}=2\kappa_0$,
and it maps $H^{3/2}(\Sigma)$ bijectively onto $H^{1/2}(\Sigma)$.
\item
The formula
\[
  A_{\delta,\alpha}^{-1} - A_\gamma^{-1} =
  \wt K_\gamma\big(P^+_{\gamma,\nu} + P^-_{\gamma,\nu} -\alpha\big)^{-1}\wt K_\gamma^*
\]
holds, where $\wt K_\gamma$ is as in \eqref{wtK} and $\wt K_\gamma^*$ is the $L_2$-adjoint
of $\wt K_\gamma$.
\end{myenum}

\end{prop}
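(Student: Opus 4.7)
The strategy is to read off the principal symbols of the constituents and then descend from the matrix operator $\Phi_{\delta,\alpha}$ supplied by Proposition~\ref{prop:unique}. By Lemma~\ref{le.poisson}(ii) each $P^\pm_{\gamma,\nu}$ is a classical $\psi$do on $\Sigma$ of order $1$ with principal symbol $\kappa_0(x',\xi')$; note that $\kappa_0$ defined in~\eqref{defkappa} depends only on the symmetric quadratic form and is insensitive to the direction of the conormal, so both sides contribute the same principal symbol. Multiplication by $\alpha\in C^\infty(\Sigma)$ is a zeroth-order $\psi$do, so $P^+_{\gamma,\nu}+P^-_{\gamma,\nu}-\alpha$ is of order $1$ with principal symbol $2\kappa_0(x',\xi')$, which is strictly positive for $\xi'\neq 0$; this gives ellipticity.

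\textbf{Bijectivity in (i).} A direct computation from~\eqref{B}, \eqref{Kgamma} and~\eqref{DNND} yields
\begin{equation*}
  \Phi_{\delta,\alpha}\begin{pmatrix}\varphi_+\\ \varphi_-\end{pmatrix}
  = \begin{pmatrix}\varphi_+-\varphi_-\\[0.3ex] P^+_{\gamma,\nu}\varphi_+ + P^-_{\gamma,\nu}\varphi_- - \alpha\varphi_+\end{pmatrix},
\end{equation*}
which on the diagonal $\varphi_+=\varphi_-=\varphi$ collapses to $\bigl(0,(P^+_{\gamma,\nu}+P^-_{\gamma,\nu}-\alpha)\varphi\bigr)^\top$. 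By Proposition~\ref{prop:unique}(ii), $\Phi_{\delta,\alpha}$ is a bijection from $H^{3/2}(\Sigma)\times H^{3/2}(\Sigma)$ onto $H^{3/2}(\Sigma)\times H^{1/2}(\Sigma)$, so for any $g\in H^{1/2}(\Sigma)$ the equation $\Phi_{\delta,\alpha}(\varphi_+,\varphi_-)^\top = (0,g)^\top$ has a unique solution; the first row forces $\varphi_+=\varphi_-$, and the second then produces the unique preimage of $g$ under $P^+_{\gamma,\nu}+P^-_{\gamma,\nu}-\alpha$. Injectivity follows analogously via the same diagonal embedding.

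\textbf{Part (ii).} The plan is to start from Theorem~\ref{thm:krein1}(i) with $A_*=A_{\delta,\alpha}$ and simplify the right-hand side. For $f\in L_2(\dR^n)$ set $v=A_\gamma^{-1}f$; since $\gamma^\pm v_\pm=0$, the trace vector $\varrho v$ has only Neumann entries, so $B_{\delta,\alpha}\varrho v = (0,\nu^+ v_++\nu^- v_-)^\top$. Lemma~\ref{le.adj} gives $\nu^\pm A_{\pm,\gamma}^{-1} = -(K^\pm_\gamma)^*$, while~\eqref{wtK} identifies $\wt K_\gamma^* f = (K^+_\gamma)^* f_+ + (K^-_\gamma)^* f_-$, so the lower entry reduces to $-\wt K_\gamma^* f$. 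Using the diagonal reduction from part~(i), $\Phi_{\delta,\alpha}^{-1}(0,-\wt K_\gamma^* f)^\top = (-\varphi,-\varphi)^\top$ with $\varphi = (P^+_{\gamma,\nu}+P^-_{\gamma,\nu}-\alpha)^{-1}\wt K_\gamma^* f$. Applying $K_\gamma$ converts $(-\varphi,-\varphi)^\top$ into $-\wt K_\gamma\varphi$, and substituting into the Krein formula yields exactly the claimed factorization. The one step that calls for careful bookkeeping is this reduction of $B_{\delta,\alpha}\varrho A_\gamma^{-1}$ to $(0,-\wt K_\gamma^*)^\top$: signs from the outward conormal derivative must be tracked correctly, and the column structure of $\wt K_\gamma$ matched with the lower row of $B_{\delta,\alpha}$. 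Beyond that I expect no essential obstacle.
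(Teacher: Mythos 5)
Your proposal is correct, and part (ii) is essentially the paper's own computation: Theorem~\ref{thm:krein1}\,(i) plus Lemma~\ref{le.adj} to rewrite $B_{\delta,\alpha}\varrho A_\gamma^{-1}f$ as $(0,-\wt K_\gamma^*f)^\top$, then inversion of $\Phi_{\delta,\alpha}$; the paper writes out the explicit $2\times2$ inverse \eqref{inv1}, whereas you solve $\Phi_{\delta,\alpha}(\psi_+,\psi_-)^\top=(0,-\wt K_\gamma^*f)^\top$ by noting the first row forces $\psi_+=\psi_-$, which is the same algebra in a leaner form, and your sign bookkeeping is right. The genuine difference is in the bijectivity claim of part (i): the paper argues that the operator $P^+_{\gamma,\nu}+P^-_{\gamma,\nu}-\alpha$ is a symmetric elliptic $\psi$do of order $1$, invokes \cite[Theorem~8.11]{G09} to get Fredholm index $0$ from $H^{3/2}(\Sigma)$ to $H^{1/2}(\Sigma)$, and then rules out a nontrivial kernel by taking a smooth kernel element $\varphi$ (elliptic regularity) and showing $\wt K_\gamma\varphi$ would be a nonzero element of $\ker A_{\delta,\alpha}$, contradicting invertibility. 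You instead deduce bijectivity directly from Proposition~\ref{prop:unique}\,(ii): since $\Phi_{\delta,\alpha}$ maps $H^{3/2}(\Sigma)\times H^{3/2}(\Sigma)$ bijectively onto $H^{3/2}(\Sigma)\times H^{1/2}(\Sigma)$, restricting to data of the form $(0,g)^\top$ forces the preimage onto the diagonal and identifies the scalar operator as a bijection. This is legitimate (Proposition~\ref{prop:unique} precedes the statement, so there is no circularity) and somewhat more economical, as it avoids the index theorem and the regularity step; both routes ultimately rest on the same fact, namely the strict positivity of $A_{\delta,\alpha}$, which in the paper enters through the kernel argument and in your version through the proof of Proposition~\ref{prop:unique}. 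The paper's kernel-based argument has the minor advantage of being the template reused verbatim for Propositions~\ref{prop:krein2} and \ref{prop:krein3}, but your diagonal reduction would adapt to those cases just as well.
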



\begin{proof}

(i) Both operators $P^+_{\gamma,\nu}$ and
$P^-_{\gamma,\nu}$ are symmetric first-order elliptic $\psi$do's
on $\Sigma$ (see Lemmas~\ref{le.poisson}\,(ii) and \ref{le.adj}) with
principal symbols $p_{\gamma,\nu}^{\pm0}$ as in \eqref{p+0}.
Hence the $\psi$do
\begin{equation}\label{psidojussi}
 P^+_{\gamma,\nu} + P^-_{\gamma,\nu} - \alpha
\end{equation}
is also symmetric of order $1$ with principal symbol
$p_{\gamma,\nu}^{+0} + p_{\gamma,\nu}^{-0} = 2\kappa_0$.
According to Lemma~\ref{le.poisson}\,(ii) we have
$p_{\gamma,\nu}^{\pm0} > 0$ for $\xi '\ne 0$,
and hence \eqref{psidojussi} is elliptic.
By \cite[Theorem~8.11]{G09} the index of the $\psi$do \eqref{psidojussi} as a
mapping from $H^{3/2}(\Sigma)$ to $H^{1/2}(\Sigma)$ is $0$.
Hence, in order to prove bijectivity of \eqref{psidojussi}
from $H^{3/2}(\Sigma)$ onto $H^{1/2}(\Sigma)$, it remains to show
that $\ker(P^+_{\gamma,\nu} + P^-_{\gamma,\nu} -\alpha)$ is trivial.
Suppose for a moment that this were
not the case.  Then, by \cite[Theorem~8.11]{G09},
there exists a non-trivial $\varphi \in C^\infty(\Sigma)$ such that
\[
  (P^+_{\gamma,\nu} + P^-_{\gamma,\nu} -\alpha)\varphi =0.
\]
Let us consider the non-trivial function $u \defeq \wt K_\gamma \varphi$.
Then we have $u \in H^2(\Omega_+)\oplus H^2(\Omega_-)$,
$\cA_+u_+\oplus \cA_- u_- = 0$, and
\begin{align*}
  \gamma^+u_+ -\gamma^-u_- &=0, \\
  \nu^+ u_+ + \nu^- u_- -\alpha\gamma u
  &=  \nu^+ K_\gamma^+ \varphi + \nu^-K_\gamma^-\varphi -\alpha\varphi \\
  & = P^+_{\gamma,\nu} \varphi + P^-_{\gamma,\nu}\varphi -\alpha\varphi = 0.
\end{align*}
From \eqref{Adelta} we conclude that $u \in \dom A_{\delta,\alpha}$
and, moreover, $A_{\delta,\alpha}u = 0$, which contradicts the
invertibility
of $A_{\delta,\alpha}$.


(ii) Let us consider the matrix $\psi$do $\Phi_{\delta,\alpha}=B_{\delta,\alpha}\varrho K_\gamma$
(cf.\ \eqref{PhiPsi}).
For $\binom{\varphi_+}{\varphi_-}\in H^{3/2}(\Sigma)\times H^{3/2}(\Sigma)$ we have
\begin{align*}
  \Phi_{\delta,\alpha}
  \begin{pmatrix}\varphi_+ \\ \varphi_-\end{pmatrix}
  &
  = \begin{pmatrix} 1 & -1 & 0 & 0 \\ -\alpha &0 &1&1 \end{pmatrix}
  \varrho K_\gamma
  \begin{pmatrix}\varphi_+ \\ \varphi_-\end{pmatrix} \\
  &= \begin{pmatrix} 1 & -1 & 0 & 0 \\ -\alpha &0 &1&1 \end{pmatrix}
  \begin{pmatrix}
    \varphi_+ \\ \varphi_- \\[0.5ex]
    P^+_{\gamma,\nu}\varphi_+ \\[0.5ex] P^-_{\gamma,\nu}\varphi_-
    \end{pmatrix}
  \\
  &= \begin{pmatrix}
    \varphi_+ - \varphi_- \\[1ex]
    -\alpha\varphi_+ + P^+_{\gamma,\nu}\varphi_+ + P^-_{\gamma,\nu}\varphi_-
  \end{pmatrix},
\end{align*}
and hence $\Phi_{\delta,\alpha}$ can be written in matrix form as
\begin{equation*}
  \Phi_{\delta,\alpha}
  = \begin{pmatrix} 1 & -1 \\
  -\alpha+ P^+_{\gamma,\nu} & P^-_{\gamma,\nu}\end{pmatrix}.
\end{equation*}
By item (i) the operator $P^+_{\gamma,\nu} + P^-_{\gamma,\nu} -\alpha$
is bijective from $H^{3/2}(\Sigma)$ onto $H^{1/2}(\Sigma)$.
It follows that the matrix operator
\begin{equation}
\label{inv1}
  (P^+_{\gamma,\nu} + P^-_{\gamma,\nu}-\alpha)^{-1}
  \begin{pmatrix}P^-_{\gamma,\nu} & 1 \\[1ex] \alpha - P^+_{\gamma,\nu} & 1\end{pmatrix}
\end{equation}
is well defined as a mapping from $H^{3/2}(\Sigma)\times H^{1/2}(\Sigma)$
into $H^{3/2}(\Sigma)\times H^{3/2}(\Sigma)$
and that it is the inverse of $\Phi_{\delta,\alpha}$. Indeed, we have
\begin{align*}
  & (P^+_{\gamma,\nu} + P^-_{\gamma,\nu}-\alpha)^{-1}
  \begin{pmatrix}P^-_{\gamma,\nu} & 1 \\ \alpha - P^+_{\gamma,\nu} & 1\end{pmatrix}
  \begin{pmatrix} 1 & -1\\ -\alpha+ P^+_{\gamma,\nu} & P^-_{\gamma,\nu}\end{pmatrix} \\[1ex]
  &= (P^+_{\gamma,\nu} + P^-_{\gamma,\nu}-\alpha)^{-1}
  \begin{pmatrix} P^+_{\gamma,\nu} + P^-_{\gamma,\nu} -\alpha & 0 \\
  0 & P^+_{\gamma,\nu} + P^-_{\gamma,\nu} -\alpha\end{pmatrix}
  = \begin{pmatrix} I & 0 \\ 0 & I \end{pmatrix}
\end{align*}
on $H^{3/2}(\Sigma)\times H^{3/2}(\Sigma)$.

For $f=f_+\oplus f_-\in L_2(\dR^n)$ we obtain from Theorem~\ref{thm:krein1},
\eqref{adj_K}, \eqref{inv1} and \eqref{B} that
\begin{align*}
  A_{\delta,\alpha}^{-1}f - A_\gamma^{-1}f
  &= -\begin{pmatrix} K_\gamma^+ & 0 \\
  0 & K_\gamma^- \end{pmatrix}
  \Phi_{\delta,\alpha}^{-1}B_{\delta,\alpha}\varrho A_\gamma^{-1}f
  \\
  &= -\begin{pmatrix} K_\gamma^+ & 0 \\
  0 & K_\gamma^- \end{pmatrix}
  \Phi_{\delta,\alpha}^{-1}B_{\delta,\alpha}
  \begin{pmatrix} 0 \\ 0 \\ \nu^+A_{+,\gamma}^{-1}f_+ \\[0.7ex]
  \nu^-A_{-,\gamma}^{-1}f_- \end{pmatrix}
  \displaybreak[0]\\
  &= \begin{pmatrix} K_\gamma^+ & 0 \\
  0 & K_\gamma^- \end{pmatrix}
  \Phi_{\delta,\alpha}^{-1}
  \begin{pmatrix} 1 & -1 & 0 & 0 \\ -\alpha & 0 & 1 & 1 \end{pmatrix}
  \begin{pmatrix} 0 \\ 0 \\ (K_\gamma^+)^*f_+ \\[0.7ex]
  (K_\gamma^-)^*f_- \end{pmatrix}
  \displaybreak[0]\\
  &= \begin{pmatrix} K_\gamma^+ & 0 \\
  0 & K_\gamma^- \end{pmatrix}
  \Phi_{\delta,\alpha}^{-1}
  \begin{pmatrix} 0 \\ \wt K_\gamma^*f \end{pmatrix}
  \\
  &= \begin{pmatrix}
  K_\gamma^+(P_{\gamma,\nu}^++P_{\gamma,\nu}^--\alpha)^{-1}\wt K_\gamma^*f \\[0.7ex]
  K_\gamma^-(P_{\gamma,\nu}^++P_{\gamma,\nu}^--\alpha)^{-1}\wt K_\gamma^*f
  \end{pmatrix}
  \\[0.5ex]
  &= \wt K_\gamma(P_{\gamma,\nu}^++P_{\gamma,\nu}^--\alpha)^{-1}\wt K_\gamma^*f,
\end{align*}
which proves item (ii).
\end{proof}

Note that the operator $A_{\delta,\alpha}$ with  $\alpha \equiv 0$ coincides
with the operator $A_0$; cf.\ Section~\ref{ssec:diffop}.
This observation, together with Proposition~\ref{prop:krein1} and the relation
\begin{equation}\label{diff_inv_p}
\begin{aligned}
  &\bigl(P^+_{\gamma,\nu}
  + P^-_{\gamma,\nu}-\alpha\bigr)^{-1}-\bigl(P^+_{\gamma,\nu}
  + P^-_{\gamma,\nu}\bigr)^{-1} \\
  &\hspace*{10ex}= \bigl(P^+_{\gamma,\nu} + P^-_{\gamma,\nu}-\alpha\bigr)^{-1}\alpha
  \bigl(P^+_{\gamma,\nu} + P^-_{\gamma,\nu}\bigr)^{-1},
\end{aligned}
\end{equation}
yields the following corollary.

\begin{cor}\label{cor:krein.delta.0}
Under the assumptions of Proposition~{\rm \ref{prop:krein1}} and with $A_0$
as in \eqref{A0},
\[
  A_{\delta,\alpha}^{-1} - A_0^{-1}
  = \wt K_\gamma\bigl(P^+_{\gamma,\nu} + P^-_{\gamma,\nu} -\alpha\bigr)^{-1}
  \alpha\bigl(P^+_{\gamma,\nu} + P^-_{\gamma,\nu}\bigr)^{-1}\wt K_\gamma^*
\]
holds.
\end{cor}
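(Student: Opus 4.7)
The plan is to derive the claim by applying Proposition~\ref{prop:krein1}\,(ii) twice and then using the algebraic identity \eqref{diff_inv_p}. The key observation, already noted in the text, is that setting $\alpha\equiv 0$ in the boundary conditions \eqref{Adelta} yields exactly the interface conditions $\gamma^+u_+=\gamma^-u_-$ and $\nu^+u_++\nu^-u_-=0$ characterizing $\dom A_0$, so $A_{\delta,0}=A_0$.

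First, I would apply Proposition~\ref{prop:krein1}\,(ii) to the given $\alpha$, obtaining
\[
A_{\delta,\alpha}^{-1}-A_\gamma^{-1} = \wt K_\gamma\bigl(P^+_{\gamma,\nu}+P^-_{\gamma,\nu}-\alpha\bigr)^{-1}\wt K_\gamma^{\,*}.
\]
Next, I would apply the same proposition with $\alpha$ replaced by the zero function; the assumptions of the proposition are satisfied vacuously in that case (no non-vanishing hypothesis is imposed on $\alpha$), so
\[
A_0^{-1}-A_\gamma^{-1} = \wt K_\gamma\bigl(P^+_{\gamma,\nu}+P^-_{\gamma,\nu}\bigr)^{-1}\wt K_\gamma^{\,*}.
\]
Note here that ellipticity and invertibility of $P^+_{\gamma,\nu}+P^-_{\gamma,\nu}$ as a map $H^{3/2}(\Sigma)\to H^{1/2}(\Sigma)$ follow from part (i) of Proposition~\ref{prop:krein1} in the special case $\alpha\equiv 0$, or alternatively directly from Lemma~\ref{le.poisson}\,(ii) together with the strict positivity of $A_0$ (which ensures a trivial kernel via the same argument used in the proof of Proposition~\ref{prop:krein1}\,(i)).

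Subtracting the two displayed identities cancels the common term $A_\gamma^{-1}$ and gives
\[
A_{\delta,\alpha}^{-1}-A_0^{-1} = \wt K_\gamma\Bigl[\bigl(P^+_{\gamma,\nu}+P^-_{\gamma,\nu}-\alpha\bigr)^{-1}-\bigl(P^+_{\gamma,\nu}+P^-_{\gamma,\nu}\bigr)^{-1}\Bigr]\wt K_\gamma^{\,*}.
\]
Finally, inserting the resolvent-type identity \eqref{diff_inv_p} in place of the bracketed expression yields the stated factorization.

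The argument is essentially algebraic once Proposition~\ref{prop:krein1} is in hand; no step is genuinely difficult. The only point that warrants care is ensuring that the composition in the bracket makes sense on the correct Sobolev scale: $(P^+_{\gamma,\nu}+P^-_{\gamma,\nu})^{-1}$ maps $H^{1/2}(\Sigma)$ to $H^{3/2}(\Sigma)$, multiplication by $\alpha\in C^\infty(\Sigma)$ preserves $H^{3/2}(\Sigma)$, and $(P^+_{\gamma,\nu}+P^-_{\gamma,\nu}-\alpha)^{-1}$ again takes $H^{1/2}(\Sigma)$ to $H^{3/2}(\Sigma)$ by Proposition~\ref{prop:krein1}\,(i); since $\wt K_\gamma^{\,*}$ lands in $H^{1/2}(\Sigma)$ (adjoint to $\wt K_\gamma\colon H^{3/2}(\Sigma)\to H^2$) and $\wt K_\gamma$ consumes an $H^{3/2}(\Sigma)$ input, all compositions are well defined on $L_2(\dR^n)$.
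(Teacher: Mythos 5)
Your argument is correct and is essentially the paper's own: the authors likewise note $A_{\delta,0}=A_0$, apply Proposition~\ref{prop:krein1}\,(ii) with the given $\alpha$ and with $\alpha\equiv0$ so that the common term $A_\gamma^{-1}$ cancels, and then invoke the identity \eqref{diff_inv_p}. Your additional remarks on the Sobolev mapping properties are a harmless elaboration of what the paper leaves implicit.
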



In the next proposition we simplify the formula from Theorem~\ref{thm:krein1}\,(ii)
for the difference between the inverses of the self-adjoint operators $A_{\delta',\beta}$ and $A_\nu$.
The proof follows the same strategy as the proof of Proposition~\ref{prop:krein1}.
For the convenience of the reader we provide the essential arguments.
We also mention that
the formula in item (ii) below is similar to the one in \cite[Theorem 3.11\,(ii)]{BLL13.Poincare}.
Here we employ the column operator
\begin{equation}
\label{wtKnu}
  \wt K_\nu \defeq \begin{pmatrix} K^+_\nu \\[0.5ex] -K^-_\nu\end{pmatrix}:
  H^{1/2}(\Sigma)\to H^2(\Omega_+)\times H^2(\Omega_-).
\end{equation}

\begin{prop}
\label{prop:krein2}
Let $A_\nu$ and $A_{\delta',\beta}$ be as defined above,
and let $P^\pm_{\nu,\gamma}$ be the Neumann-to-Dirichlet maps in \eqref{DNND}.
Then the following statements hold.
\begin{myenum}
\item
The $\psi$do $\beta - (P^+_{\nu,\gamma} + P^-_{\nu,\gamma})$ is elliptic of order $0$
with principal symbol $\beta$, and it maps $H^{1/2}(\Sigma)$
bijectively onto $H^{1/2}(\Sigma)$.
\item
The formula
\[
  A_{\delta',\beta}^{-1} - A_\nu^{-1} = \wt K_\nu
\big(\beta - (P^+_{\nu,\gamma} + P^-_{\nu,\gamma})\big)^{-1}\wt K_\nu^*
\]
holds, where  $\wt K_\nu$ is as in \eqref{wtKnu} and $\wt K_\nu^*$ is
the $L_2$-adjoint of $\wt K_\nu$.
\end{myenum}
\end{prop}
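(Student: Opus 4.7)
The plan is to mimic the proof of Proposition~\ref{prop:krein1}, treating the two items in turn and replacing throughout the Dirichlet Poisson operators and Dirichlet-to-Neumann maps by their Neumann and Neumann-to-Dirichlet counterparts.

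For part~(i), the operators $P^\pm_{\nu,\gamma}$ are $\psi$do's of order $-1$ on $\Sigma$ by Lemma~\ref{le.poisson}(ii), so $\beta-(P^+_{\nu,\gamma}+P^-_{\nu,\gamma})$ is a $\psi$do of order $0$ with principal symbol $\beta$. Since $\beta$ is real and non-vanishing on the compact manifold $\Sigma$, this operator is elliptic, and because $P^\pm_{\nu,\gamma}$ map $H^{1/2}(\Sigma)$ into $H^{3/2}(\Sigma)$ and thus compactly into $H^{1/2}(\Sigma)$, the whole operator is a compact perturbation of the bijective multiplication operator by $\beta$ on $H^{1/2}(\Sigma)$, hence Fredholm of index zero. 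To rule out a kernel, suppose $(\beta-P^+_{\nu,\gamma}-P^-_{\nu,\gamma})\varphi=0$ for some $\varphi\in H^{1/2}(\Sigma)$; elliptic regularity gives $\varphi\in C^\infty(\Sigma)$. Setting $u\defeq\wt K_\nu\varphi$ and using $\nu^\pm K_\nu^\pm=I$ together with the definitions of $P^\pm_{\nu,\gamma}$, one checks that both boundary conditions in \eqref{Adeltaprime} hold, so $u\in\dom A_{\delta',\beta}$ with $A_{\delta',\beta}u=0$. Strict positivity forces $u=0$, whence $\varphi=\nu^+K_\nu^+\varphi=0$, and bijectivity follows.

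For part~(ii), I would first determine the matrix form of $\Psi_{\delta',\beta}=B_{\delta',\beta}\varrho K_\nu$: since $\varrho K_\nu(\psi_+,\psi_-)^\top=(P^+_{\nu,\gamma}\psi_+,P^-_{\nu,\gamma}\psi_-,\psi_+,\psi_-)^\top$, a direct computation gives
\[
\Psi_{\delta',\beta}=\begin{pmatrix}P^+_{\nu,\gamma}-\beta & -P^-_{\nu,\gamma}\\ 1 & 1\end{pmatrix}.
\]
Writing $Q\defeq\beta-P^+_{\nu,\gamma}-P^-_{\nu,\gamma}$ (invertible by~(i)) and applying the Schur-complement formula with lower-right block $D=I$ yields
\[
\Psi_{\delta',\beta}^{-1}=\begin{pmatrix}-Q^{-1} & -Q^{-1}P^-_{\nu,\gamma}\\ Q^{-1} & I+Q^{-1}P^-_{\nu,\gamma}\end{pmatrix}.
\]
For $f=f_+\oplus f_-$, Lemma~\ref{le.adj} combined with $\nu^\pm A^{-1}_{\pm,\nu}=0$ gives $B_{\delta',\beta}\varrho A_\nu^{-1}f=(\wt K_\nu^*f,0)^\top$. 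Inserting these into the formula of Theorem~\ref{thm:krein1}(ii) yields
\[
A_{\delta',\beta}^{-1}f-A_\nu^{-1}f = -K_\nu\begin{pmatrix}-Q^{-1}\wt K_\nu^*f\\ Q^{-1}\wt K_\nu^*f\end{pmatrix} = \begin{pmatrix}K_\nu^+Q^{-1}\wt K_\nu^*f\\ -K_\nu^-Q^{-1}\wt K_\nu^*f\end{pmatrix} = \wt K_\nu\,Q^{-1}\,\wt K_\nu^*f
\]
by the definition \eqref{wtKnu} of $\wt K_\nu$.

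I expect the main obstacle to be the sign bookkeeping: the minus sign built into the definition of $\wt K_\nu$ has to reconcile with the signs appearing in $B_{\delta',\beta}$ and in the Schur-complement inverse of $\Psi_{\delta',\beta}$, and the non-commutativity of $\beta$, $P^+_{\nu,\gamma}$, and $P^-_{\nu,\gamma}$ requires that the order of factors in $\Psi_{\delta',\beta}^{-1}$ be written down with some care.
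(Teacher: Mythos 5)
Your proposal is correct and follows essentially the same route as the paper: invoke Theorem~\ref{thm:krein1}\,(ii), compute the matrix form of $\Psi_{\delta',\beta}=B_{\delta',\beta}\varrho K_\nu$ and its inverse, use Lemma~\ref{le.adj} together with $\nu^\pm A_{\pm,\nu}^{-1}=0$ to evaluate $B_{\delta',\beta}\varrho A_\nu^{-1}f=(\wt K_\nu^*f,0)^\top$, and for (i) establish index zero plus trivial kernel via the positivity of $A_{\delta',\beta}$. Your Schur-complement form of $\Psi_{\delta',\beta}^{-1}$ and your compact-perturbation argument for the Fredholm index are equivalent to the paper's expressions (which instead cite \cite[Theorem~8.11]{G09}), so the differences are only presentational.
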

\begin{proof}

(i)
The operators $P^\pm_{\nu,\gamma}$ are symmetric elliptic $\psi$do's
on $\Sigma$ of order $-1$ (see Lemmas~\ref{le.poisson}\,(ii) and \ref{le.adj}).
Since $\beta$ is real-valued and non-zero on $\Sigma$,
$\beta - (P^+_{\nu,\gamma} + P^-_{\nu,\gamma})$ is a
symmetric and elliptic $\psi$do of order $0$ with principal symbol $\beta$.
Hence, by \cite[Theorem~8.11]{G09}, its index as
a mapping from $H^{1/2}(\Sigma)$ into $H^{1/2}(\Sigma)$ is $0$.
Therefore it suffices to verify the injectivity  of
$\beta - (P^+_{\nu,\gamma} + P^-_{\nu,\gamma})$.
Suppose that this were not the case.
As in the proof of Proposition~\ref{prop:krein1}\,(i) it follows that there exists a non-trivial
$\psi \in C^\infty(\Sigma)$ such that
$(\beta - (P^+_{\nu,\gamma} + P^-_{\nu,\gamma}))\psi = 0$
and the function $\wt K_\nu \psi\not= 0$ belongs to $\ker
A_{\delta',\beta}$; this is a contradiction to the invertibility
of $A_{\delta',\beta}$.

(ii) A simple calculation shows that the matrix $\psi$do $\Psi_{\delta',\beta}=B_{\delta',\beta}\varrho K_\nu$ acts as
\begin{equation*}
  \Psi_{\delta',\beta}\begin{pmatrix} \psi_+ \\ \psi_- \end{pmatrix}
  = \begin{pmatrix} 1&-1&-\beta&0 \\ 0 &0&1&1 \end{pmatrix}\varrho K_\nu
  \begin{pmatrix} \psi_+ \\ \psi_- \end{pmatrix} =  \begin{pmatrix}
    P_{\nu,\gamma}^+ -\beta & -P_{\nu,\gamma}^- \\
    1 & 1
  \end{pmatrix}\begin{pmatrix} \psi_+ \\ \psi_- \end{pmatrix},
\end{equation*}
and a similar consideration as in the proof of Proposition~\ref{prop:krein1}~(ii) yields
\begin{equation}
\label{inv2}
  \Psi_{\delta',\beta}^{-1}=\big(P_{\nu,\gamma}^+ + P_{\nu,\gamma}^- -\beta\big)^{-1}
  \begin{pmatrix}
  1 & P_{\nu,\gamma}^- \\[1ex]
  -1& P_{\nu,\gamma}^+ -\beta
  \end{pmatrix}.
\end{equation}
It is seen from
the form of $B_{\delta',\beta}$ in \eqref{B}, relations \eqref{adj_K},
\eqref{inv2} and Theorem~\ref{thm:krein1}\,(ii) that
\begin{align*}
  & A_{\delta',\beta}^{-1}f - A_\nu^{-1}f
  = -\begin{pmatrix} K_\nu^+ & 0 \\
  0 & K_\nu^-\end{pmatrix}
  \Psi_{\delta',\beta}^{-1}B_{\delta',\beta}
  \begin{pmatrix} (K_\nu^+)^* f_+ \\ (K_\nu^-)^* f_- \\ 0 \\ 0 \end{pmatrix}\\
  &= -
  \begin{pmatrix} K_\nu^+ & 0 \\
  0 & K_\nu^- \end{pmatrix}
  \big(P^+_{\nu,\gamma} + P^-_{\nu,\gamma} -\beta\big)^{-1}
  \begin{pmatrix} (K_\nu^+)^*f_+ - (K_\nu^-)^*f_- \\[1ex]
  (K_\nu^-)^*f_- -(K_\nu^+)^*f_+ \end{pmatrix}\\
  &=\wt K_\nu \big(\beta - (P^+_{\nu,\gamma} + P^-_{\nu,\gamma})\big)^{-1}
  \wt K_\nu^*f
\end{align*}
holds for all $f\in L_2(\dR^n)$, which proves (ii).
\end{proof}


Finally we provide a more explicit formula for the difference
between the inverses of
the self-adjoint operators $A_0$ and $A_\nu$. Again the proof follows the same strategy as
the proofs of Propositions \ref{prop:krein1} and \ref{prop:krein2}.


\begin{prop}
\label{prop:krein3}
Let $A_0$ and $A_\nu$ be as above, and let $P^\pm_{\nu,\gamma}$ be
the Neumann-to-Dirichlet maps $P^\pm_{\nu,\gamma}$ in \eqref{DNND}
with principal symbols $p_{\nu,\gamma}^{\pm0}$ in \eqref{p+0}.
Then the following statements hold.
\begin{myenum}
\item
The $\psi$do $P^+_{\nu,\gamma} + P^-_{\nu,\gamma}$ is elliptic of
order $-1$ with principal symbol $p_{\nu,\gamma}^{+0} + p_{\nu,\gamma}^{-0}$,
and it maps $H^{1/2}(\Sigma)$ bijectively onto $H^{3/2}(\Sigma)$.
\item
The formula
\[
  A_\nu^{-1} - A_0^{-1}  = \wt K_\nu\big(P^+_{\nu,\gamma} + P^-_{\nu,\gamma}\big)^{-1}\wt K_\nu^*
\]
holds, where $\wt K_\nu$ is as in \eqref{wtKnu}.
\end{myenum}
\end{prop}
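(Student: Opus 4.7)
The plan is to handle the two items separately, closely mirroring the strategies of Propositions~\ref{prop:krein1} and \ref{prop:krein2}.

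For item~(i), I first observe that $P^+_{\nu,\gamma}$ and $P^-_{\nu,\gamma}$ are symmetric elliptic $\psi$do's of order $-1$ on $\Sigma$ by Lemmas~\ref{le.poisson}(ii) and \ref{le.adj}. Their sum is then of order $-1$ with principal symbol $p^{+0}_{\nu,\gamma}+p^{-0}_{\nu,\gamma} = 2/\kappa_0$, which is strictly positive for $\xi'\neq 0$; hence $P^+_{\nu,\gamma} + P^-_{\nu,\gamma}$ is elliptic. Its index as a mapping from $H^{1/2}(\Sigma)$ to $H^{3/2}(\Sigma)$ is zero by \cite[Theorem~8.11]{G09}, so it remains to establish injectivity. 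If $(P^+_{\nu,\gamma}+P^-_{\nu,\gamma})\psi = 0$ for some non-trivial smooth $\psi$, I set $u \defeq \wt K_\nu \psi$ and verify directly that $u_\pm$ solve $\cA_\pm u_\pm = 0$ with $\nu^+ u_+ = \psi$, $\nu^- u_- = -\psi$, so $\nu^+u_+ + \nu^-u_- = 0$, and $\gamma^+u_+ - \gamma^-u_- = P^+_{\nu,\gamma}\psi + P^-_{\nu,\gamma}\psi = 0$. Thus $u\in\dom A_0$ with $A_0 u = 0$, contradicting the strict positivity of $A_0$.

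For item~(ii), I apply Theorem~\ref{thm:krein1}(ii) with $B_* = B_0$, giving
\[
 A_0^{-1} - A_\nu^{-1} = -K_\nu\Psi_0^{-1} B_0 \varrho A_\nu^{-1}.
\]
A direct computation yields
\[
 \Psi_0 = B_0\varrho K_\nu = \begin{pmatrix} P^+_{\nu,\gamma} & -P^-_{\nu,\gamma} \\ 1 & 1 \end{pmatrix},
\]
and by item~(i) this matrix is invertible with
\[
 \Psi_0^{-1} = \bigl(P^+_{\nu,\gamma} + P^-_{\nu,\gamma}\bigr)^{-1} \begin{pmatrix} 1 & P^-_{\nu,\gamma} \\ -1 & P^+_{\nu,\gamma} \end{pmatrix}.
\]
Using Lemma~\ref{le.adj} in the form $\gamma^\pm A_{\pm,\nu}^{-1} f_\pm = (K_\nu^\pm)^* f_\pm$ together with $\nu^\pm A_{\pm,\nu}^{-1} f_\pm = 0$, I get $B_0\varrho A_\nu^{-1} f = \binom{(K_\nu^+)^* f_+ - (K_\nu^-)^* f_-}{0}$. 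Applying $\Psi_0^{-1}$ produces a column vector whose two entries are $\pm(P^+_{\nu,\gamma}+P^-_{\nu,\gamma})^{-1}\wt K_\nu^* f$, and passing through the diagonal operator $K_\nu$ reassembles this into $\wt K_\nu (P^+_{\nu,\gamma}+P^-_{\nu,\gamma})^{-1}\wt K_\nu^* f$ up to an overall minus sign, which is absorbed by the leading minus in Theorem~\ref{thm:krein1}(ii).

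The main technical obstacle is bookkeeping of signs: the minus in the second component of $\wt K_\nu$ and the minus in the first row of $B_0$ need to conspire so that $(K_\nu^+)^* f_+ - (K_\nu^-)^* f_-$ is recognized as $\wt K_\nu^* f$ and the overall sign comes out correctly. The injectivity argument in (i) also hinges on noticing that the ansatz $u = \wt K_\nu \psi$ automatically satisfies the second transmission condition $\nu^+ u_+ + \nu^- u_- = 0$ built into $\dom A_0$, while the hypothesis $(P^+_{\nu,\gamma}+P^-_{\nu,\gamma})\psi = 0$ supplies the first condition $\gamma^+ u_+ = \gamma^- u_-$; both are needed to reach a contradiction.
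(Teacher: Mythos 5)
Your proof is correct and follows the paper's argument essentially step for step: in (i) ellipticity of $P^+_{\nu,\gamma}+P^-_{\nu,\gamma}$, zero index via \cite[Theorem~8.11]{G09}, and triviality of the kernel by exhibiting $u=\wt K_\nu\psi$ as a non-trivial element of $\ker A_0$; in (ii) Theorem~\ref{thm:krein1}\,(ii) with $B_*=B_0$, the explicit matrices $\Psi_0$ and $\Psi_0^{-1}$, and the adjoint relations \eqref{adj_K}. One bookkeeping clarification: applying the diagonal $K_\nu$ to the column with entries $\pm\bigl(P^+_{\nu,\gamma}+P^-_{\nu,\gamma}\bigr)^{-1}\wt K_\nu^*f$ yields exactly $\wt K_\nu\bigl(P^+_{\nu,\gamma}+P^-_{\nu,\gamma}\bigr)^{-1}\wt K_\nu^*f$ with no extra minus sign, and the leading minus in Theorem~\ref{thm:krein1}\,(ii) is precisely what converts the computed expression for $A_0^{-1}-A_\nu^{-1}$ into the asserted formula for $A_\nu^{-1}-A_0^{-1}$.
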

\begin{proof}
(i) Following the arguments in the proofs of Proposition~\ref{prop:krein1} and Proposition~\ref{prop:krein2} we
conclude that the $\psi$do $P^+_{\nu,\gamma} + P^-_{\nu,\gamma}$ is elliptic of order $-1$
with principal symbol
$p_{\nu,\gamma}^{+0}+p_{\nu,\gamma}^{-0}$, and its index as a mapping
from $H^{1/2}(\Sigma)$ into $H^{3/2}(\Sigma)$ is $0$. Again it is sufficient for the bijectivity to verify
that $\ker(P^+_{\nu,\gamma} + P^-_{\nu,\gamma})$ is trivial.
Suppose that this were not the case.
Then it follows that  there exists a non-trivial
$\psi \in C^\infty(\Sigma)$ such that
$(P^+_{\nu,\gamma} + P^-_{\nu,\gamma})\psi =0$ and the
function $\wt K_\nu \psi\not= 0$ belongs to $\ker A_0$, a
contradiction to the invertibility
of $A_0$.


(ii) The matrix $\psi$do $\Psi_0=B_0 \varrho K_\nu$ and its inverse have the form
\[
\Psi_0 = \begin{pmatrix} P_{\nu,\gamma}^+& -P_{\nu,\gamma}^-\\ 1&1\end{pmatrix}\quad\text{and}\quad
\Psi_0^{-1}=\big(P_{\nu,\gamma}^+ + P_{\nu,\gamma}^-\big)^{-1}
  \begin{pmatrix}
  1 & P_{\nu,\gamma}^- \\[1ex]
  -1 & P_{\nu,\gamma}^+
  \end{pmatrix}.
\]
Hence it follows from \eqref{adj_K}, the form of $B_0$ in \eqref{B} and
Theorem~\ref{thm:krein1}~(ii) that
\begin{align*}
  A_\nu^{-1}f - A_0^{-1}f
  &= \begin{pmatrix} K_\nu^+ & 0 \\
  0 & K_\nu^-\end{pmatrix}
  \Psi_0^{-1}B_0
  \begin{pmatrix} (K_\nu^+)^* f_+ \\ (K_\nu^-)^* f_- \\ 0 \\ 0 \end{pmatrix} \\
  &= \begin{pmatrix} K_\nu^+ & 0 \\
  0 & K_\nu^- \end{pmatrix}
  \big(P^+_{\nu,\gamma} + P^-_{\nu,\gamma}\big)^{-1}
  \begin{pmatrix} (K_\nu^+)^*f_+ - (K_\nu^-)^*f_- \\[1ex]
  (K_\nu^-)^*f_- -(K_\nu^+)^*f_+  \end{pmatrix}\\
  &=\wt K_\nu \big(P^+_{\nu, \gamma}
  + P^-_{\nu,\gamma}\big)^{-1}\wt K_\nu^*f
\end{align*}
holds for all $f\in L_2(\dR^n)$, which proves (ii).
\end{proof}

\begin{cor}\label{cor:krein.delta'.0}
Under the assumptions of Proposition~{\rm \ref{prop:krein3}}
and with $A_{\delta',\beta}$ as in \eqref{Adeltaprime},
\[
  A_{\delta',\beta}^{-1} - A_0^{-1}
  = \wt K_\nu\big(P_{\nu,\gamma}^+ + P_{\nu,\gamma}^-\big)^{-1}\beta
  \big(\beta - (P_{\nu,\gamma}^+ + P_{\nu,\gamma}^-) \big)^{-1}\wt K_\nu^*
\]
holds.
\end{cor}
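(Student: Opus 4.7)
The plan is to mimic the derivation of Corollary~\ref{cor:krein.delta.0}, but built on the Neumann side of the theory instead of the Dirichlet side. The key observation is that the two Krein-type formulae already proved, namely Proposition~\ref{prop:krein2}\,(ii) and Proposition~\ref{prop:krein3}\,(ii), telescope to give $A_{\delta',\beta}^{-1}-A_0^{-1}$ when added, and the resulting sum inside the factorization by $\wt K_\nu$ and $\wt K_\nu^*$ can be rewritten algebraically into the desired product form.

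Concretely, first I would write, abbreviating $Q \defeq P_{\nu,\gamma}^+ + P_{\nu,\gamma}^-$,
\begin{equation*}
  A_{\delta',\beta}^{-1} - A_\nu^{-1}
  = \wt K_\nu\,(\beta - Q)^{-1}\wt K_\nu^{*}
  \qquad\text{(Proposition~\ref{prop:krein2}\,(ii))},
\end{equation*}
\begin{equation*}
  A_\nu^{-1} - A_0^{-1}
  = \wt K_\nu\, Q^{-1}\wt K_\nu^{*}
  \qquad\text{(Proposition~\ref{prop:krein3}\,(ii))},
\end{equation*}
both inverses being well defined by the bijectivity parts (i) of the same two propositions. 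Adding them yields
\begin{equation*}
  A_{\delta',\beta}^{-1} - A_0^{-1}
  = \wt K_\nu\bigl[(\beta - Q)^{-1} + Q^{-1}\bigr]\wt K_\nu^{*}.
\end{equation*}

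The remaining step is the purely algebraic identity
\begin{equation*}
  (\beta - Q)^{-1} + Q^{-1} = Q^{-1}\,\beta\,(\beta - Q)^{-1},
\end{equation*}
which I would verify by writing $\beta = (\beta - Q) + Q$ and applying $Q^{-1}$ on the left and $(\beta-Q)^{-1}$ on the right, so that
\begin{equation*}
  Q^{-1}\beta(\beta-Q)^{-1} = Q^{-1}(\beta-Q)(\beta-Q)^{-1} + Q^{-1}Q(\beta-Q)^{-1}
  = Q^{-1} + (\beta - Q)^{-1}.
\end{equation*}
No commutativity between $\beta$ and $Q$ is required, which is fortunate since $\beta$ is multiplication by a smooth function while $Q$ is a pseudodifferential operator of order $-1$. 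Substituting this identity back gives precisely the formula claimed in the corollary.

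I do not anticipate a genuine obstacle: the Krein identities of Propositions~\ref{prop:krein2} and \ref{prop:krein3} do all the operator-theoretic work, and what remains is the resolvent-style manipulation above, whose only prerequisite is that both $Q$ and $\beta - Q$ are invertible as maps $H^{1/2}(\Sigma)\to H^{3/2}(\Sigma)$ and $H^{1/2}(\Sigma)\to H^{1/2}(\Sigma)$ respectively; this is guaranteed by Propositions~\ref{prop:krein2}\,(i) and \ref{prop:krein3}\,(i). The only point worth double-checking during writing is that the factor ordering in the final product matches the statement of the corollary, which it does by the computation above.
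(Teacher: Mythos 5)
Your proposal is correct and follows exactly the route the paper intends (the corollary is stated without proof, in analogy with Corollary~\ref{cor:krein.delta.0}): add the Krein formulae of Propositions~\ref{prop:krein2}\,(ii) and \ref{prop:krein3}\,(ii) and apply the algebraic identity $(\beta-Q)^{-1}+Q^{-1}=Q^{-1}\beta(\beta-Q)^{-1}$ with $Q=P_{\nu,\gamma}^++P_{\nu,\gamma}^-$, which is the $\delta'$-analogue of \eqref{diff_inv_p}. No issues.
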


\section{Spectral asymptotics for resolvent differences}\label{sec:4}

In this section we present and prove the main results of this note, namely,
we obtain spectral asymptotics formulae for the differences
between the inverses of the
operators
$A_0$, $A_\nu$, $A_{\delta,\alpha}$ and $A_{\delta',\beta}$
introduced in Section~\ref{ssec:diffop}.  These
asymptotics refine some spectral estimates for resolvent differences
found in \cite{BLL13.IEOT,BLL13.Poincare}.
The proofs are based on the Krein-type resolvent formulae proved in
Section~\ref{sec:krein}, spectral asymptotics for $\psi$do's
on smooth manifolds without boundary
and some elements of the $\psi$dbo calculus.

In all theorems of this section we suppose that the assumptions at
the beginning of Section~\ref{ssec:diffop} hold.
Moreover, let $\ula_{nn}$ and $\kappa_0$ be defined as in \eqref{quadratic}
and \eqref{defkappa}.
The next theorem contains one of our main results:
the spectral asymptotics of the difference
between the inverses of $A_{\delta,\alpha}$ and $A_0$.

\begin{thm}\label{th:asymp_delta}
Let $\alpha\in C^\infty(\Sigma)$ be real-valued,
let $A_{\delta,\alpha}$ be the self-adjoint operator in \eqref{Adelta}
and let $A_0$ be the free operator in \eqref{A0}.
Then
\[
 A_{\delta,\alpha}^{-1} - A_0^{-1}
\]
is a compact operator in $L_2(\dR^n)$
and the following two statements hold.
\begin{itemize}\setlength{\parskip}{1.2ex}
\item [\rm (i)]
The singular values $s_k$ of $A_{\delta,\alpha}^{-1} - A_0^{-1}$ satisfy
\begin{equation*}
  s_k =
  C_{\delta,\alpha}k^{-\frac{3}{n-1}} +
  \rmo\bigl(k^{-\frac{3}{n-1}}\bigr),\qquad  k\rightarrow \infty,
\end{equation*}
with the constant $ C_{\delta,\alpha} = \bigl(C_{\delta,\alpha}'\bigr)^{\frac{3}{n-1}}$,
\begin{equation*}
  C_{\delta,\alpha}' =
 \frac{1}{(n-1)(2\pi)^{n-1}}\int_{\Sigma}\int_{|\xi'|=1}\!
  \left(\frac{\ula_{nn}(x')|\alpha(x')|}{4\bigl(\kappa_0(x',\xi')\bigr)^3}
  \right)^{\!\!\frac{n-1}{3}} \!\!\rmd\omega(\xi')\rmd\sigma(x').
\end{equation*}

\item [\rm (ii)]
If\, $\alpha(x') \ne 0$ for all $x'\in\Sigma$, then the singular values $s_k$ of $A_{\delta,\alpha}^{-1} - A_0^{-1}$ satisfy
\begin{equation*}
  s_k =
  C_{\delta,\alpha} k^{-\frac{3}{n-1}}+
  \rmO\bigl(k^{-\frac{4}{n-1}}\bigr), \qquad k\rightarrow\infty.
\end{equation*}
\end{itemize}
\end{thm}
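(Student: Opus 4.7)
The plan is to reduce the computation of the singular values of $G_{\delta,\alpha}=A_{\delta,\alpha}^{-1}-A_0^{-1}$ to that of a single self-adjoint classical $\psi$do on the closed hypersurface $\Sigma$, and then to invoke Theorem~\ref{thm:grubb}. Corollary~\ref{cor:krein.delta.0} provides the factorization
\[
G_{\delta,\alpha}=\wt K_\gamma M\wt K_\gamma^*,\qquad
M\defeq(L-\alpha)^{-1}\alpha L^{-1},\qquad L\defeq P^+_{\gamma,\nu}+P^-_{\gamma,\nu}.
\]
I first use the identity $(L-\alpha)^{-1}\alpha L^{-1}=L^{-1}\alpha(L-\alpha)^{-1}$ to conclude that $M$ is symmetric on $L_2(\Sigma)$, and then invoke Lemma~\ref{le.poisson}(ii) to identify $M$ as an elliptic self-adjoint $\psi$do of order $-2$ with principal symbol $\alpha(x')/(4(\kappa_0(x',\xi'))^2)$.

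Next, I transfer the spectral problem from $L_2(\dR^n)$ to $L_2(\Sigma)$. Lemma~\ref{le.poisson}(iii) identifies $S\defeq\wt K_\gamma^*\wt K_\gamma=(K_\gamma^+)^*K_\gamma^+ + (K_\gamma^-)^*K_\gamma^-$ as a positive elliptic $\psi$do on $\Sigma$ of order $-1$ with principal symbol $\ula_{nn}(x')/\kappa_0(x',\xi')$, and the relation $\gamma^\pm K_\gamma^\pm=I$ shows that $\wt K_\gamma$, hence $S$, is injective. By Seeley's theorem on complex powers the operator $S^{1/2}$ is then a self-adjoint $\psi$do of order $-1/2$ with principal symbol $(\ula_{nn}/\kappa_0)^{1/2}$, and the sandwiched operator
\[
\wt\cP\defeq S^{1/2}MS^{1/2}
\]
is a self-adjoint classical $\psi$do on $\Sigma$ of order $-3$ with principal symbol $\alpha(x')\,\ula_{nn}(x')/(4(\kappa_0(x',\xi'))^3)$.

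To connect the singular values, I use the standard fact that $AB$ and $BA$ share all nonzero eigenvalues with multiplicities: applied successively to $\wt K_\gamma\cdot(M\wt K_\gamma^*)$ and $S^{1/2}\cdot(MS^{1/2})$, this gives that the nonzero eigenvalues of $G_{\delta,\alpha}$, of $M\wt K_\gamma^*\wt K_\gamma$, and of $\wt\cP$ all coincide. Because both $G_{\delta,\alpha}$ and $\wt\cP$ are self-adjoint and compact,
\[
s_k(G_{\delta,\alpha})=|\lambda_k(G_{\delta,\alpha})|=|\lambda_k(\wt\cP)|=s_k(\wt\cP),
\]
so part (i) follows immediately from Theorem~\ref{thm:grubb}(i) with $t=3$: the integrand $|p^0|^{(n-1)/3}$ reduces to $(|\alpha|\,\ula_{nn}/(4\kappa_0^3))^{(n-1)/3}$, which is exactly $C'_{\delta,\alpha}$, and $C_{\delta,\alpha}=(C'_{\delta,\alpha})^{3/(n-1)}$.

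For part (ii) the hypothesis $\alpha(x')\neq 0$ on $\Sigma$ ensures that the principal symbol of $\wt\cP$ does not vanish on $\{\xi'\neq 0\}$, so $\wt\cP$ is elliptic. It is also self-adjoint and hence Fredholm on $L_2(\Sigma)$; a finite-rank self-adjoint perturbation restores invertibility while leaving the asymptotic expansion unaltered beyond a harmless finite index shift, so Theorem~\ref{thm:grubb}(ii) is applicable and yields the refined remainder $\rmO(k^{-4/(n-1)})$. The main obstacle I anticipate will be the clean justification of the spectral equivalence $s_k(G_{\delta,\alpha})=s_k(\wt\cP)$, in particular the treatment of $S^{1/2}$ as a $\psi$do for the positive but not bounded-invertible operator $S$, and the verification via symbol calculus that $S^{1/2}MS^{1/2}$ really is classical of order $-3$ with the claimed principal symbol.
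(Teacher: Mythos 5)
Your proposal is correct and follows essentially the same route as the paper: the Krein-type factorization of Corollary~\ref{cor:krein.delta.0}, passage to the sandwiched operator $R_\gamma^{1/2}S_\alpha R_\gamma^{1/2}$ (your $S^{1/2}MS^{1/2}$ is literally the same $\psi$do of order $-3$ with principal symbol $\ula_{nn}\alpha/(4\kappa_0^3)$), and an application of Theorem~\ref{thm:grubb}. The only cosmetic deviations are that the paper transfers singular values via $s_k^2=\lambda_k(G^*G)$ and cyclic invariance rather than your $AB$/$BA$ eigenvalue argument, and in (ii) it observes that $S_\alpha$ and $R_\gamma^{1/2}$ are invertible (so no finite-rank correction is needed, though your perturbation argument also works since the index shift only costs $\rmO(k^{-\frac{3}{n-1}-1})$); also note $M$ need not be elliptic in part (i) where $\alpha$ may vanish, but that is not used there.
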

\begin{proof}
(i) Let us set
\[
  G_{\delta,\alpha} \defeq A_{\delta,\alpha}^{-1} -
  A_0^{-1}
  \quad
  \text{and}\quad
  S_\alpha\defeq\bigl(P_{\gamma,\nu}^+ + P_{\gamma,\nu}^- -
  \alpha\bigr)^{-1}\alpha
  \bigl(P_{\gamma,\nu}^++P_{\gamma,\nu}^-\bigr)^{-1},
\]
where $P_{\gamma,\nu}^+$ and $P_{\gamma,\nu}^-$ are defined in \eqref{DNND},
and let $\wt K_\gamma$ be as in \eqref{wtK}.
It follows from Corollary~\ref{cor:krein.delta.0} that
\begin{equation}\label{jussi1}
  G_{\delta,\alpha}
  = \widetilde K_\gamma S_\alpha \widetilde K_\gamma^*,
\end{equation}
which is a bounded self-adjoint operator in $L_2(\dR^n)$.  We also make use of the operator
\begin{equation*}
  R_\gamma\defeq\widetilde K_\gamma^*\widetilde K_\gamma
  = (K_\gamma^+)^*K_\gamma^+ + (K_\gamma^-)^*K_\gamma^-,
\end{equation*}
which is a $\psi$do on $\Sigma$ of order $-1$
with the principal symbol $\ula_{nn}/\kappa_0$ according to
Lemma~\ref{le.poisson}\,(iii).
Note that for a non-trivial $f\in L_2(\Sigma)$
one has
\[
((K^\pm_\gamma)^*K^\pm_\gamma f,f)_{L_2(\Sigma)}=
\|K^\pm_\gamma f\|^2_{L_2(\Omega _\pm)} > 0.
\]
Thus the non-negative self-adjoint operators $(K^\pm_\gamma)^*K^\pm_\gamma$ are
invertible; hence so is $R_\gamma $.

In the following we use that in the pseudodifferential boundary
operator calculus, $\widetilde K_\gamma $, $\widetilde K_\gamma ^*$
and $R_\gamma $ extend to continuous operators
\begin{alignat*}{2}
  \widetilde K_\gamma&\colon H^s(\Sigma)\to H^{s+\frac12}(\Omega_+)
  \times H^{s+\frac12}(\Omega_-) \qquad & &\text{for } s\in\mathbb{R}, \\
  \widetilde K_\gamma^*&\colon H^{t}(\Omega_+)
  \times H^{t} (\Omega_-) \to H^{t+\frac12}(\Sigma) \qquad & &\text{for } t>-\tfrac12, \\
  R_\gamma&\colon H^s(\Sigma)\to H^{s+1}(\Sigma) \qquad & &\text{for } s\in\mathbb{R}
\end{alignat*}
(we use the same notation for the extended/restricted operators).
By Proposition~\ref{prop:krein1}~(i) and
Lemma~\ref{le.poisson}\,(ii) the $\psi$do's
$(P_{\gamma,\nu}^++P_{\gamma,\nu}^-)^{-1}$ and
$(P_{\gamma,\nu}^++P_{\gamma,\nu}^--\alpha)^{-1}$ are both of order $-1$ and
have the principal symbol $1/(2\kappa_0)$.
Hence $S_\alpha$ is of order $-2$ and has the principal symbol $\alpha/(4\kappa_0^2)$.
It extends to a mapping
\begin{equation*}
  S_\alpha \colon H^s(\Sigma )\to H^{s+2}(\Sigma) \qquad \text{for } s\in\mathbb{R}.
\end{equation*}
It now follows from \eqref{jussi1} that
\begin{equation*}
\begin{split}
 G_{\delta,\alpha}^* G_{\delta,\alpha} &=
 \bigl(\widetilde K_\gamma S_\alpha \widetilde K_\gamma^*\bigr)^*\widetilde K_\gamma S_\alpha \widetilde K_\gamma^*\\
 &=\widetilde K_\gamma S_\alpha^* \widetilde K_\gamma^*\widetilde K_\gamma S_\alpha \widetilde K_\gamma^*\\
 &=\widetilde K_\gamma  S_\alpha  R_\gamma  S_\alpha \widetilde K_\gamma^*,
\end{split}
\end{equation*}
where we used the symmetry of $S_\alpha$, which follows from \eqref{diff_inv_p}.
Therefore (here $s_k(T)$ denotes the $k$th singular value and $\lambda_k(T)$
the $k$th positive eigenvalue of an operator $T$)
\begin{equation}\label{skcomp}
\begin{split}
  \bigl(s_k(G_{\delta,\alpha})\bigr)^2
  &= \lambda_k(G^*_{\delta,\alpha}G_{\delta,\alpha}) \\
  &=\lambda_k\bigl(\widetilde K_\gamma  S_\alpha  R_\gamma  S_\alpha
  \widetilde K_\gamma^*\bigr) \\
  &=\lambda_k\bigl( S_\alpha  R_\gamma  S_\alpha
  \widetilde K_\gamma^*\widetilde K_\gamma\bigr) \\
  &= \lambda_k(S_\alpha R_\gamma S_\alpha R_\gamma)\\
  &= \lambda_k\bigl(R_\gamma^{1/2}  S_\alpha  R_\gamma^{1/2} R_\gamma^{1/2}
  S_\alpha  R_\gamma^{1/2}\bigr)\\
  &= \bigl(s_k\bigl( R_\gamma^{1/2} S_\alpha  R_\gamma^{1/2}\bigr)\bigr)^2,
\end{split}
\end{equation}
that is, the singular values $s_k$ of $A_{\delta,\alpha}^{-1} - A_0^{-1}$ are
\begin{equation*}
  s_k(G_{\delta,\alpha})=s_k\bigl(R_\gamma^{1/2}  S_\alpha R_\gamma^{1/2}\bigr).
\end{equation*}

Here since $R_\gamma $ is  non-negative and invertible, the operator
$R_\gamma^{1/2}$ is well defined as an elliptic $\psi$do of order $-1/2$ with principal symbol
$(\ula_{nn}/\kappa_0)^{1/2}$, by \cite{S67}. Moreover,
$ S_\alpha$ is a $\psi$do of order $-2$ with principal symbol
$\alpha/(4\kappa_0^2)$; so
standard rules of the $\psi$do calculus yield that the $\psi$do
\begin{equation}
\label{Pdeltaalpha}
  P_{\delta,\alpha}\defeq  R_\gamma^{1/2}  S_\alpha  R_\gamma^{1/2}
\end{equation}
is of order $-3$ and with the principal symbol $(\ula_{nn}\alpha)/(4\kappa_0^3)$.
Now Theorem~\ref{thm:grubb}\,(i) implies that
\begin{equation*}
  s_k(P_{\delta,\alpha})
  =   C_{\delta,\alpha} k^{-\frac{3}{n-1}}
	  + \rmo\bigl(k^{-\frac{3}{n-1}}\bigr),\qquad k\rightarrow\infty,
\end{equation*}
with $C_{\delta,\alpha}$ as in the formulation of the theorem.

(ii)
Recall that $\ula_{nn}(x') > 0$ and $\kappa_0(x',\xi') > 0$
for $\xi' \ne 0$; cf.\ Section~\ref{sec:psido}.
Hence the assumption $\alpha(x^\prime)\not=0$ for all $x^\prime\in\Sigma$ implies that
the $\psi$do $P_{\delta,\alpha}$ in \eqref{Pdeltaalpha} is elliptic.
Moreover, since $S_\alpha $ is then invertible and
$R_\gamma^{1/2}$ is invertible,
the operator $P_{\delta,\alpha}$ in \eqref{Pdeltaalpha} is invertible.
Now Theorem~\ref{thm:grubb}\,(ii) implies the assertion.
\end{proof}

The next theorem contains our second main result:
the spectral asymptotics of the difference
between the inverses of $A_{\delta^\prime,\beta}$ and $A_0$.
It turns out that the principal term in the asymptotics is independent
of $\beta$. The proof of Theorem~\ref{thm:asymp} is very
similar to the  proof of Theorem~\ref{th:asymp_delta}.

\begin{thm}
\label{thm:asymp}
Let $\beta\in C^\infty(\Sigma)$ be real-valued such that
$\beta(x')\ne 0$ for all $x'\in\Sigma$,
let $A_{\delta^\prime,\beta}$ be the self-adjoint operator in \eqref{Adeltaprime} and let $A_0$ be the free operator in \eqref{A0}. Then
\[
  A_{\delta',\beta}^{-1} - A_0^{-1}
\]
is a compact operator in $L_2(\dR^n)$, and its singular values satisfy
\begin{equation*}
s_k=
 C_{\delta'} k^{-\frac{2}{n-1}}
  + \rmO\bigl(k^{-\frac{3}{n-1}}\bigr),
  \qquad k\rightarrow\infty,
\end{equation*}
with the constant $C_{\delta'} = (C_{\delta'}')^{\frac{2}{n-1}}$,
\begin{equation*}
  C_{\delta'}' = \frac{1}{(n-1)(2\pi)^{n-1}}
  \int_{\Sigma}\int_{|\xi'|=1}\left(\frac{\ula_{nn}(x')}{2(\kappa_0(x^\prime,\xi^\prime))^2}
  \right)^{\frac{n-1}{2}}\rmd\omega(\xi')\rmd\sigma(x').
\end{equation*}
\end{thm}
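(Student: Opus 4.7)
The strategy is to repeat the argument used for Theorem~\ref{th:asymp_delta}, but on the Neumann side. First, I would start from Corollary~\ref{cor:krein.delta'.0} and write
\[
  G_{\delta',\beta}\defeq A_{\delta',\beta}^{-1} - A_0^{-1} = \widetilde K_\nu\,T_\beta\,\widetilde K_\nu^*,
\]
with $T_\beta \defeq X^{-1}\beta(\beta-X)^{-1}$ and $X \defeq P_{\nu,\gamma}^+ + P_{\nu,\gamma}^-$. Propositions~\ref{prop:krein2}\,(i) and \ref{prop:krein3}\,(i) show that $X$ is an invertible elliptic $\psi$do of order $-1$ on $\Sigma$ with principal symbol $2/\kappa_0$, and that $\beta-X$ is an invertible elliptic $\psi$do of order $0$ with principal symbol $\beta$. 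Hence $X^{-1}$ and $(\beta-X)^{-1}$ are $\psi$do's of orders $1$ and $0$ with principal symbols $\kappa_0/2$ and $1/\beta$, respectively. The $\psi$do composition rules then yield that $T_\beta$ is of order $1$ with principal symbol $(\kappa_0/2)\cdot\beta\cdot(1/\beta) = \kappa_0/2$; in particular the leading symbol is independent of $\beta$, exactly as the statement predicts.

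Next, by Lemma~\ref{le.poisson}\,(iii), the operator
\[
  R_\nu \defeq \widetilde K_\nu^*\widetilde K_\nu = (K_\nu^+)^* K_\nu^+ + (K_\nu^-)^* K_\nu^-
\]
is a $\psi$do of order $-3$ on $\Sigma$ with principal symbol $\ula_{nn}/\kappa_0^3$, and since each summand is strictly positive on $L_2(\Sigma)$ it is invertible. Seeley's theorem then gives $R_\nu^{1/2}$ as an invertible $\psi$do of order $-3/2$ with principal symbol $(\ula_{nn}/\kappa_0^3)^{1/2}$. I would then mimic the computation \eqref{skcomp}: from $G_{\delta',\beta}^*G_{\delta',\beta} = \widetilde K_\nu\,T_\beta^* R_\nu T_\beta\,\widetilde K_\nu^*$ and the identity $\lambda_k(AB)=\lambda_k(BA)$ for the non-zero spectrum, one obtains
\[
  s_k(G_{\delta',\beta})^2 = \lambda_k\bigl(R_\nu^{1/2}T_\beta^*R_\nu T_\beta R_\nu^{1/2}\bigr) = \bigl(s_k\bigl(R_\nu^{1/2}T_\beta R_\nu^{1/2}\bigr)\bigr)^2.
\]

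The remaining step is to apply Theorem~\ref{thm:grubb} to $P_{\delta'}\defeq R_\nu^{1/2}T_\beta R_\nu^{1/2}$. By the composition rules this is a $\psi$do of order $-2$ with principal symbol
\[
  \bigl(\ula_{nn}/\kappa_0^3\bigr)^{1/2}\cdot(\kappa_0/2)\cdot\bigl(\ula_{nn}/\kappa_0^3\bigr)^{1/2} = \frac{\ula_{nn}}{2\,\kappa_0^2},
\]
which is strictly positive for $\xi'\ne 0$, so $P_{\delta'}$ is elliptic; it is also invertible since each of $X$, $\beta$, $\beta-X$ and $R_\nu^{1/2}$ is. Theorem~\ref{thm:grubb}\,(ii) therefore applies with $t=2$, and a direct evaluation of the constant $c(P_{\delta'})$ from the symbol above yields exactly the $C_{\delta'}'$ displayed in the theorem, giving the asymptotic formula with remainder $\rmO(k^{-3/(n-1)})$.

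The one mildly delicate point I anticipate is the asymmetry of $T_\beta$: since $T_\beta\ne T_\beta^*$ in general, there is no single obvious self-adjoint $\psi$do sitting inside the sandwich whose singular values one can read off directly. The fix is the $R_\nu^{1/2}$-trick of \eqref{skcomp}, which reduces matters to the singular values of a single (not necessarily self-adjoint) $\psi$do, and Theorem~\ref{thm:grubb} applies without any symmetry hypothesis. All the remaining ingredients — order and principal-symbol calculus, ellipticity, and the invertibility needed to upgrade the remainder from $\rmo$ to $\rmO$ — follow directly from results already established in Sections~\ref{sec:psido} and \ref{sec:krein}.
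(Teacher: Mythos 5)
Your proposal is correct and follows essentially the same route as the paper: Corollary~\ref{cor:krein.delta'.0}, the $R_\nu^{1/2}$-sandwich computation as in \eqref{skcomp}, the symbol calculus giving a $\psi$do of order $-2$ with principal symbol $\ula_{nn}/(2\kappa_0^2)$, and Theorem~\ref{thm:grubb}\,(ii) with $t=2$. The only (harmless) discrepancy is your remark that $T_\beta\ne T_\beta^*$ in general: in fact $T_\beta=(\beta-X)^{-1}+X^{-1}$ is symmetric by the analogue of \eqref{diff_inv_p}, which is what the paper tacitly uses, though your variant of the singular-value computation works without this.
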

\begin{proof}
According to Corollary~\ref{cor:krein.delta'.0} we have
\begin{equation*}
  A_{\delta',\beta}^{-1} - A_0^{-1}=  \wt K_\nu\big(P_{\nu,\gamma}^+ + P_{\nu,\gamma}^-\big)^{-1}\beta
  \big(\beta - (P_{\nu,\gamma}^+ + P_{\nu,\gamma}^-) \big)^{-1}\wt K_\nu^*,
 \end{equation*}
where $P^+_{\nu,\gamma}$, $P^-_{\nu,\gamma}$ and  $\widetilde K_\nu$ are
as in \eqref{DNND} and \eqref{wtKnu}, respectively.
Let us set
\[
  R_\nu \defeq \widetilde K_\nu^*\widetilde K_\nu\quad\text{and}\quad
  T_\beta \defeq
  \big(P_{\nu,\gamma}^+ + P_{\nu,\gamma}^-\big)^{-1}\beta
  \big(\beta - (P_{\nu,\gamma}^+ + P_{\nu,\gamma}^-) \big)^{-1}.
\]
Since $K_\nu ^\pm$ are injective, the non-negative self-adjoint
operators $(K_\nu^\pm)^*K_\nu^\pm$ are invertible;
hence so is $R_\nu$.
As in the proof of Theorem~\ref{th:asymp_delta} we make use of the fact
that $\widetilde K_\nu$, $\widetilde K_\nu^*$ and $T_\beta$
extend to continuous operators between the respective spaces.
Then the same computation as in \eqref{skcomp} implies
\begin{equation*}
  s_k(A_{\delta',\beta}^{-1} - A_0^{-1})=s_k\bigl(R_\nu^{1/2} T_\beta R_\nu^{1/2}\bigr).
\end{equation*}
The operator $R_\nu^{1/2}$ is well defined as an elliptic $\psi$do of order $-3/2$ with principal symbol
$(\ula_{nn}/\kappa_0^3)^{1/2}$ by \cite{S67}.
It follows from Lemma~\ref{le.poisson}\,(ii) and Proposition~\ref{prop:krein2}\,(i)
that $T_\beta$ is a $\psi$do of order $1$ with principal symbol $\kappa_0/2$.
By standard rules of the calculus the $\psi$do
\begin{equation}
\label{Pdeltabeta}
  P_{\delta',\beta} \defeq R_\nu^{1/2} T_\beta R_\nu^{1/2}
\end{equation}
is of order $-2$ and has the principal symbol $\ula_{nn}/(2\kappa_0^2)$.
Since $\ula_{nn}(x') > 0$ and $\kappa_0(x',\xi') > 0$
for $\xi' \ne 0$, it follows that
the operator  $P_{\delta',\beta}$ is elliptic.
Since $T_\beta$ and $R_\nu^{1/2}$ are invertible the same true for $P_{\delta',\beta}$ in \eqref{Pdeltabeta}.
Now Theorem~\ref{thm:grubb}\,(ii) yields the spectral asymptotics
\begin{equation*}
  s_k(P_{\delta',\beta})
  =  C_{\delta'}k^{-\frac{2}{n-1}}
  + \rmO\bigl(k^{-\frac{3}{n-1}}\bigr),\qquad k\rightarrow\infty,
\end{equation*}
with $C_{\delta^\prime}$ as in the formulation of the theorem.
\end{proof}

Finally we state a third result on spectral asymptotics.
In the next theorem  the difference of the inverses of $A_{\delta',\beta}$ and the orthogonal sum of the Neumann
operators $A_\nu$ is considered. The proof is similar to the proofs of the previous theorems.
Therefore we just give the main arguments.

\begin{thm}\label{th:asymp_delta'}
Let $\beta\in C^\infty(\Sigma)$ be real-valued such that
$\beta(x')\ne0$ for all $x'\in\Sigma$, let $A_{\delta^\prime,\beta}$ be the
self-adjoint operator in \eqref{Adeltaprime} and let $A_\nu$ be as in
\eqref{ADNsum}. Then
\[
  A_{\delta',\beta}^{-1} - A_\nu^{-1}
\]
is a compact operator in $L_2(\dR^n)$, and its singular values satisfy
\begin{equation*}
s_k = C_{\delta',\beta,\nu}k^{-\frac{3}{n-1}}+\rmO\bigl(k^{-\frac{4}{n-1}}\bigr),
  \qquad k\rightarrow\infty,
\end{equation*}
with the constant
 $C_{\delta',\beta,\nu} = (C_{\delta',\beta,\nu}')^{\frac{3}{n-1}}$,
\begin{equation*}
 C_{\delta',\beta,\nu}' = \frac{1}{(n-1)(2\pi)^{n-1}}
  \int_{\Sigma}\int_{|\xi'|=1}\!\!\left(
  \frac{\ula_{nn}(x')}{|\beta(x')|\,(\kappa_0(x',\xi'))^3}\right)^{\!\frac{n-1}{3}}
  \!\!\!\rmd\omega(\xi')\rmd\sigma(x').
\end{equation*}
\end{thm}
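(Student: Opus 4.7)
The plan is to follow the same strategy as in the proofs of Theorem~\ref{th:asymp_delta} and Theorem~\ref{thm:asymp}, using the Krein-type formula of Proposition~\ref{prop:krein2}\,(ii), which states
\[
  A_{\delta',\beta}^{-1} - A_\nu^{-1}
  = \wt K_\nu \bigl(\beta - (P_{\nu,\gamma}^+ + P_{\nu,\gamma}^-)\bigr)^{-1} \wt K_\nu^*,
\]
and then reducing the computation of the singular values to the known spectral asymptotics of an elliptic $\psi$do on $\Sigma$ (Theorem~\ref{thm:grubb}\,(ii)).

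First I would introduce the auxiliary operators
\[
  R_\nu \defeq \wt K_\nu^* \wt K_\nu = (K_\nu^+)^*K_\nu^+ + (K_\nu^-)^*K_\nu^-,
  \qquad
  U_\beta \defeq \bigl(\beta - (P_{\nu,\gamma}^+ + P_{\nu,\gamma}^-)\bigr)^{-1}.
\]
By Lemma~\ref{le.poisson}\,(iii), $R_\nu$ is a $\psi$do on $\Sigma$ of order $-3$ with principal symbol $\ula_{nn}/\kappa_0^3$, and by the same injectivity argument used in the previous proofs it is invertible and strictly positive; hence $R_\nu^{1/2}$ is a well-defined elliptic $\psi$do of order $-3/2$ with principal symbol $(\ula_{nn}/\kappa_0^3)^{1/2}$, by \cite{S67}. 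By Proposition~\ref{prop:krein2}\,(i), $\beta - (P_{\nu,\gamma}^+ + P_{\nu,\gamma}^-)$ is an elliptic $\psi$do of order $0$ with principal symbol $\beta$, so $U_\beta$ is an elliptic $\psi$do of order $0$ with principal symbol $1/\beta$; it is symmetric since $\beta$ is real and $P_{\nu,\gamma}^\pm$ are symmetric.

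Next, using that $\wt K_\nu U_\beta \wt K_\nu^*$ is self-adjoint, the exact same calculation as \eqref{skcomp} gives
\[
  s_k\bigl(A_{\delta',\beta}^{-1} - A_\nu^{-1}\bigr)
  = s_k\bigl(R_\nu^{1/2}\, U_\beta\, R_\nu^{1/2}\bigr),
\]
where the mapping properties of $\wt K_\nu$ and $\wt K_\nu^*$ between appropriate $H^s$-spaces (as spelled out in the proof of Theorem~\ref{th:asymp_delta}) justify all compositions. By the standard composition rules of the $\psi$do calculus, the operator
\[
  P_{\delta',\beta,\nu} \defeq R_\nu^{1/2}\, U_\beta\, R_\nu^{1/2}
\]
is a $\psi$do on $\Sigma$ of order $-3$ with principal symbol $\ula_{nn}/(\beta\,\kappa_0^3)$. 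Since $\beta$ is nowhere zero on $\Sigma$, $\ula_{nn} > 0$ and $\kappa_0 > 0$ for $\xi' \ne 0$, this operator is elliptic, and it is invertible as a composition of three invertible operators.

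The main (and essentially only) technical point is the verification of the boundary-operator mapping properties and the identification of the principal symbol of $P_{\delta',\beta,\nu}$; both are routine given the ingredients already established. Once this is in place, Theorem~\ref{thm:grubb}\,(ii) applied to $P_{\delta',\beta,\nu}$ with $t = 3$ delivers
\[
  s_k(P_{\delta',\beta,\nu}) = C_{\delta',\beta,\nu}\, k^{-\frac{3}{n-1}} + \rmO\bigl(k^{-\frac{4}{n-1}}\bigr),
  \qquad k \to \infty,
\]
where
\[
  C_{\delta',\beta,\nu} = \bigl(C_{\delta',\beta,\nu}'\bigr)^{\frac{3}{n-1}},
  \qquad
  C_{\delta',\beta,\nu}' = \frac{1}{(n-1)(2\pi)^{n-1}}\int_\Sigma\!\int_{|\xi'|=1}
  \left|\frac{\ula_{nn}(x')}{\beta(x')\,\kappa_0(x',\xi')^3}\right|^{\frac{n-1}{3}} \rmd\omega\,\rmd\sigma,
\]
with the absolute value of $\beta$ arising from the modulus of the principal symbol in the formula for $c(P)$. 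This is precisely the constant stated in the theorem, completing the argument.
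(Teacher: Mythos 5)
Your proposal is correct and follows essentially the same route as the paper's own proof: the Krein formula of Proposition~\ref{prop:krein2}\,(ii), reduction via the \eqref{skcomp}-type computation to $s_k\bigl(R_\nu^{1/2}S_\beta R_\nu^{1/2}\bigr)$ with $R_\nu=\wt K_\nu^*\wt K_\nu$, identification of the principal symbol $\ula_{nn}/(\beta\kappa_0^3)$ of an elliptic invertible $\psi$do of order $-3$, and an application of Theorem~\ref{thm:grubb}\,(ii). Your explicit remark on the symmetry of $\bigl(\beta-(P_{\nu,\gamma}^++P_{\nu,\gamma}^-)\bigr)^{-1}$, which the paper leaves implicit in its ``as in the previous proofs'' step, is a welcome touch.
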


\begin{proof}
According to Proposition~\ref{prop:krein2}\,(ii) we have
\begin{equation*}
  A_{\delta',\beta}^{-1} - A_\nu^{-1}=\widetilde
  K_\nu\bigl(\beta - (P^+_{\nu,\gamma} + P^-_{\nu,\gamma})\bigr)^{-1}
  \widetilde K_\nu^*,
\end{equation*}
where $P^+_{\nu,\gamma}$, $P^-_{\nu,\gamma}$ and $\widetilde K_\nu$
are as in \eqref{DNND} and \eqref{wtKnu}, respectively.
Let
\[
  R_\nu\defeq\widetilde K_\nu^*\widetilde K_\nu\quad\text{and}\quad
  S_\beta\defeq\bigl(\beta - (P^+_{\nu,\gamma}+P^-_{\nu,\gamma})\bigr)^{-1},
\]
and observe that $\widetilde K_\nu$, $\widetilde K_\nu^*$ and $S_\beta$
extend to continuous operators between the respective spaces.
As in the proofs of Theorems~\ref{th:asymp_delta} and \ref{thm:asymp} one verifies
\begin{equation*}
 s_k( A_{\delta',\beta}^{-1} - A_\nu^{-1})=s_k\bigl(R_\nu^{1/2} S_\beta R_\nu^{1/2}\bigr).
\end{equation*}
Furthermore, $R_\nu^{1/2}$ is a $\psi$do of order $-3/2$
with principal symbol $(\ula_{nn}/\kappa_0^3)^{1/2}$, and by Proposition~\ref{prop:krein2}\,(i)
the $\psi$do $S_\beta$ is
of order $0$ with principal symbol $\beta^{-1}$.
Therefore the $\psi$do
\begin{equation*}
    P_{\delta',\beta,\nu} \defeq R_\nu^{1/2} S_\beta R_\nu^{1/2}
\end{equation*}
is of order $-3$ and has the principal symbol $\ula_{nn}/(\beta\kappa_0^3)$.
It also follows that $P_{\delta',\beta,\nu}$ is elliptic and invertible,
and hence Theorem~\ref{thm:grubb}\,(ii) yields
the spectral asymptotics
\[
  s_k(P_{\delta',\beta,\nu}) =
C_{\delta',\beta,\nu} k^{-\frac{3}{n-1}}+
  \rmO\bigl(k^{-\frac{4}{n-1}}\bigr),\qquad k\rightarrow\infty,
\]
with $C_{\delta',\beta,\nu}$ of the form as stated in the theorem.
\end{proof}

The constants $C_{\delta,\alpha}'$, $C_{\delta'}'$ and $C_{\delta',\beta,\nu}'$
in the previous theorems can be computed explicitly
in the special case $\cA=-\Delta$.

\begin{ex}\label{ex:laplace2}
Denote by $|\Sigma|$  the area of $\Sigma$
and by $|S_{n-1}|$ the area of the $(n-1)$-dimensional
unit sphere (for $n=2$ we have $|S_{n-1}|=2$).
According to Example~\ref{ex:laplace1}
in the special case $\cA = -\Delta$ the constant
$C_{\delta,\alpha}'$
in Theorem~\ref{th:asymp_delta} is
\[
\begin{split}
  C_{\delta,\alpha}' &
  = \frac{1}{(n-1)(2\pi)^{n-1}}
  \int_{\Sigma}\int_{|\xi'|=1}
  \Bigg(\frac{1\cdot|\alpha(x')|}{4|\xi'|^3}\Bigg)^{\frac{n-1}{3}}
  \rmd\omega(\xi')\rmd\sigma(x') \\
  &= \frac{|S_{n-1}|}{(n-1)(2\pi)^{n-1}4^{(n-1)/3}}
  \int_{\Sigma}|\alpha(x')|^{\frac{n-1}{3}}\rmd\sigma(x'),
\end{split}
\]
the constant $C_{\delta'}'$ in Theorem~\ref{thm:asymp} is
\[
\begin{split}
 C_{\delta'}' & =
  \frac{1}{(n-1)(2\pi)^{n-1}}
  \int_{\Sigma}\int_{|\xi'|=1}
  \Bigg(\frac{1}{2|\xi'|^2}\Bigg)^{\frac{n-1}{2}}
  \rmd\omega(\xi')\rmd\sigma(x') \\
  &=  \frac{|\Sigma|\cdot|S_{n-1}|}{(n-1)(2\pi)^{n-1}2^{(n-1)/2}}\,,
\end{split}
\]
and the constant $C_{\delta',\beta,\nu}'$
in Theorem~\ref{th:asymp_delta'} is
\[
\begin{split}
  C_{\delta',\beta,\nu}'
  & = \frac{1}{(n-1)(2\pi)^{n-1}}
  \int_{\Sigma}\int_{|\xi'|=1}
  \Bigg(\frac{1}{|\beta(x')|\cdot|\xi'|^3}\Bigg)^{\frac{n-1}{3}}
  \rmd\omega(\xi')\rmd\sigma(x')\\
  &=
  \frac{|S_{n-1}|}{(n-1)(2\pi)^{n-1}}
  \int_{\Sigma}\Bigg(\frac{1}{|\beta(x')|}\Bigg)^{\frac{n-1}{3}}
  \rmd\sigma(x').
\end{split}
\]
\end{ex}

\begin{remark}\label{remdelta}
We note that the Krein-type formulae in Section~\ref{sec:krein} for the
differences of the inverses $A_{\delta,\alpha}^{-1}-A_0^{-1}$,
$A_{\delta^\prime,\beta}^{-1}-A_0^{-1}$ and $A_{\delta^\prime,\beta}^{-1}-A_\nu^{-1}$
can be generalized for the differences of the resolvents
$(A_{\delta,\alpha}-\lambda)^{-1}-(A_0-\lambda)^{-1}$,
$(A_{\delta^\prime,\beta}-\lambda)^{-1}-(A_0-\lambda)^{-1}$
and $(A_{\delta^\prime,\beta}-\lambda)^{-1}-(A_\nu-\lambda)^{-1}$ for all $\lambda$
in the respective resolvent sets.
Making use of such resolvent formulae one can show that the spectral asymptotics
in Theorem~\ref{th:asymp_delta}, Theorem \ref{thm:asymp} and
Theorem~\ref{th:asymp_delta'} remain true for all $\lambda$ in the respective resolvent sets.
\end{remark}


\end{document}